\newtheorem{Theorem}{Theorem}
\newtheorem{Proposition}{Proposition}
\newtheorem{Lemma}{Lemma}
\newtheorem{Remark}{Remark}
\begin{document}
\title{Lorentz Process with shrinking holes in a wall}
\author{P\'{e}ter N\'{a}ndori, Domokos Sz\'{a}sz
}
\maketitle

\begin{abstract}
We ascertain the diffusively scaled limit of a periodic Lorentz
process in a strip with an almost reflecting wall at the origin.
Here, almost reflecting means that the wall contains
a small hole waning in time. The
limiting process is a quasi-reflected Brownian motion, which is
Markovian but not strong Markovian. Local time results for the periodic Lorentz process, having  independent interest, are also found and used. 
\end{abstract}

\textbf{
The periodic Lorentz process is a fascinating
non-linear, chaotic model that has been deeply investigated in the
last decades. The model is very simple: a massless point particle
moves freely in the plane (or in our case, in a strip) until
it hits one of the periodically situated smooth convex scatterers,
when it is reflected. The limit of the diffusively scaled trajectory
of the particle is known to be the Brownian motion. Further,
if the particle is restricted to a half strip, then the scaling
limit is going to be the so-called reflected Brownian motion.
Here we
introduce a {\it time-dependent scatterer configuration} (by adding a vertical wall with a shrinking hole) that
almost confines the particle to the half strip in such a way
that the scaling limit is a {\it quasi-reflected Brownian motion}, a natural generalization of both the
Brownian motion and the reflected Brownian motion.
}

\section{Introduction}  Lorentz models were introduced by H. Lorentz in 1905 for understanding the motion of a classical electron in a crystal (cf. \cite{H}).
In the last decade after a broad and thorough study of
Sinai billiards - or equivalently of periodic Lorentz processes - the non-homogeneous case got also widely examined. Here, {\it non-homogeneity} may appear either
in time (cf. \cite{Ch-D} as to  a mechanical model of Brownian motion or   \cite{GR-KST},  \cite{DK}, \cite{LChK} as to models of Fermi acceleration) or in space (cf. \cite{D-Sz-V} as to local perturbations of periodic Lorentz processes).
In the present work we investigate a question with non-homogeneity in time. Consider a periodic
Lorentz process with a finite horizon given in a horizontal strip, where the scatterer
configuration is assumed to be symmetric with respect to a vertical axis - through the origin, say.
Now, put a vertical wall at the symmetry axis and a tiny hole onto
the wall. The hole is getting smaller and smaller with time, thus giving
the particle less and less chance to cross the wall. It is an intriguing question at which speed the hole should shrink to result
a non trivial scaling limit of the trajectory of the particle (if such a speed exists at all).
Here, non trivial means that it is neither Brownian motion (BM), nor reflected
Brownian motion (RBM) since, if the hole was of full size or absent, then
these two processes would appear in the limit (see \cite{D-Sz-V}).\\

Indeed, if one takes the hole arbitrarily small, but fixed of size $\varepsilon > 0$,
then the limiting process is a BM whereas if the hole is empty,
then it is a RBM. The essence of this observation is that the limiting process does not change continuously as $\varepsilon \to 0$ and our goal is precisely to understand the situation when the limit is taken in a more delicate, time-dependent way.\\

To be more precise, let the configuration space in the absence of the wall be $\mathcal{D}:= (\mathbb{R} \times [0,1])
\setminus \cup_{i=1}^{\infty} O_i$. Here, $\{ O_i \}_i$ is a $\mathbb Z$-periodic extension of
a finite scatterer configuration in the unit square, which consists of
strictly convex, pairwise disjoint scatterers, with ${C}^3$
smooth boundaries, whose curvatures are bounded from below by a positive constant.
Further, we assume that $\cup_{i=1}^{\infty} O_i$ is
symmetric with respect to
the $y$-axis. The wall without the hole is $W_\infty = \{(x, y) \in \mathcal D |\, x=0\} = \cup_{k=1}^K [\mathcal{J}_{k,l}, \mathcal{J}_{k,r}]$
where the subintervals of the $y$-axis, denoted by $[\mathcal{J}_{k,l}, \mathcal{J}_{k,r}]$,
are the connected components of $W_\infty$. For later reference, put
\[ c_1 = \sum_{k=1}^{K} (\mathcal{J}_{k,r} - \mathcal{J}_{k,l}).\]
The {\it holes} will be subintervals $I_n \subset W_\infty$, thus we will be
considering a sequence $\{W_n = W_\infty \setminus I_n\}_n$ of walls.
Now, the $n$-th configuration space of the {\it billiard flow} is
$\mathcal{D}_n:= (\mathbb{R} \times [0,1])
\setminus (W_n \cup \cup_{i=1}^{\infty} O_i)$.
A massless point particle moves inside $\mathcal{D}_n$ (at time $t=0$ the first hole is present,
i.e. $n=1$) with unit speed until
it hits the boundary $\partial \mathcal{D}_n$. Then it is reflected by the classical
laws of mechanics (the angle of incidence equals to the angle of reflection)
and continues free movement (or free flight) in $\mathcal{D}_{n+1}$. Thus, at the time
instant of each reflection, the hole is replaced by an other one (meaning that the shrinking rate of the hole corresponds to real time). We also mention
that the reflections on the horizontal boundaries of the strip does not play any role
in our study. Thus one could define the horizontal direction to be periodic
(formally replace $[0,1]$ by $S^1$ in the definitions of $\mathcal D$ and
$\mathcal{D}_n$) yielding the same results (with some different limiting variance).\\
Since we change the configuration space in the moment of the reflection,
it is more convenient to use the discretized version of the billiard flow
(the usual Poincar\'e section, which is often called billiard ball map).
Thus define the {\it phase spaces}
\[ \mathcal{M}_n =\{ x = (q,v), q \in \partial \mathcal{D}_n, v \in S^1,
\langle v,u \rangle \geq 0 \text{ if $q \in \partial \mathcal D$  }\},\]
where $u$ denotes the inward unit normal vector to $\partial \mathcal D$
at the point $q \in \partial \mathcal D$. Here, $q$ denotes the position
of the particle at a collision and $v$ is the postcollisional velocity
vector.
If $q \in \partial \mathcal D$, $v$ can be naturally parametrized
by the angle between $u$ and $v$ which is in the interval
$[-\pi /2, \pi/2]$. If $q \in \partial W_n = W_n$, one can parametrize
$v$ by its angle to the horizontal axis. Thus, if this angle is in the interval
$[-\pi /2, \pi/2]$, then the particle is on the right-hand side of the wall, while
it is on the left-hand side if this angle is either in the interval
$[\pi /2, \pi]$ or in $(- \pi, - \pi/2]$.\\
Thus, the discretized version of the previously described billiard flow can be
defined by the {\it billiard ball maps} $\mathcal{F}_n :
\mathcal{M}_n \rightarrow \mathcal{M}_{n+1}$. Further, denote by
$\kappa_n: \mathcal{M}_n \rightarrow \mathbb R$ the projection to the horizontal direction of the {\it free flight} vector
from  $\mathcal{M}_n$ to $\mathcal{M}_{n+1}$ (that is, if $x=(q,v) \in
\mathcal{M}_n$ and $\mathcal{F}_n(x) =
(\tilde q, \tilde v)$, then $\kappa_n (x)$ is the projection to the horizontal axis
of the vector $\tilde q - q$). We also assume that the billiard has
{\it finite horizon}, meaning that, in the $\mathbb{Z}^2$-periodic extension
of the scatterer configuration, there is no infinite line on the plane
that would be disjoint to all the scatterers. Further,
write $\mathcal{I}_n = \{ I_k\}_{1 \leq k \leq n }$ for the collection of
the first $n$ holes, and
\[ S_n(x,\mathcal{I}_n)=S_n (x) = \sum_{k=1}^n
\kappa_k \mathcal{F}_{k-1} \dots \mathcal{F}_1(x),\]
where $x \in \mathcal{M}_1$. \\
What remains is the definition of the holes $I_n$.
For this,
 fix some sequence $\underline{\alpha} = (\alpha_n)_{n \geq 1}$ with $\alpha_n \rightarrow 0$ and, independently
 of each other, choose uniformly
 distributed points $\xi_n,$ $n \geq 1$ on
 $\cup_{i=1}^{K} [\mathcal{J}_{i,l}, \mathcal{J}_{i,r}]$.
We will use the following three special choices:
\begin{enumerate}
\item Assume that $\xi_n \in [\mathcal{J}_{i,l}, \mathcal{J}_{i,r}]$,
and denote $l_n = \mathcal{J}_{i,r} - \xi_n$. If $l_n > \alpha_n$, then put
 $I_n = (\xi_n, \xi_n + \alpha_n ) $, otherwise put
 $I_n = (\xi_n, \mathcal{J}_{i,r} ) \cup
 (\mathcal{J}_{i,l}, \mathcal{J}_{i,l} + \alpha_n - l_n)$, which is a subset
 of $W_{\infty}$ for
$n$ large enough. With this particular choice, write
\[ S^{\searrow}_n(x, \underline{\alpha}) =  S^{\searrow}_n(x) =
 S_n(x,\mathcal{I}_n) \]
 and
 \[ \mathcal{F}^{\searrow}_n = \mathcal{F}_n.\]
\item For each $1\leq k \leq n $, let the random variables $\xi_n^{(k)}$
be independent and distributed like $\xi_n$.
Assume that $\xi_n^{(k)} \in [\mathcal{J}_{i,l}, \mathcal{J}_{i,r}]$,
and denote $l_{n}^{(k)} = \mathcal{J}_{i,r} - \xi_n^{(k)}$.
If $l_{n}^{(k)} > \alpha_n$, then put
 $I_n^{(k)} = (\xi_n^{(k)}, \xi_n^{(k)} + \alpha_n ) $, otherwise put
  $I_n^{(k)} = (\xi_n^{(k)}, \mathcal{J}_{i,r} ) \cup
   (\mathcal{J}_{i,l}, \mathcal{J}_{i,l} + \alpha_n - l_n^{(k)})$, and finally
$\mathcal{I}_{n} = (I_n^{(k)})_{1 \leq k \leq n }$.
With this particular choice, write
\[ S^{\equiv}_n(x, \underline{\alpha}) =  S^{\equiv}_n(x) =
 S_n(x,\mathcal{I}_{n}). \]
\item Let $I_n = W_{\infty}$. With this particular choice, write
\[ S^{(per)}_n( x) =
 S_n(x,\mathcal{I}_n), \]
and for a fixed $x$, define
$S_t^{(per)}(x)$ for $t\geq 0$ as the piecewise linear,
continuous extension of $S_n^{(per)}(x)$. Finally, write
\begin{eqnarray*}
\mathcal{F}^{(per)} &=& \mathcal{F}_1,\\
\mathcal{M}^{(per)} &=& \mathcal{M}_1.
\end{eqnarray*}
\end{enumerate}

Here the first choice - the only really time dependent - is the most interesting one.
In the second case, one has to redefine the whole trajectory segment
$S_1^{\equiv}, \dots S_n^{\equiv}$ for each $n$, thus we have a
sequence of billiards (in other words, the increments of $S_n^{\equiv}$
form a double array), while the third one is just a usual
periodic Lorentz process.\\
There is a natural measure - the projection of the {\it Liouville measure}
of the periodic billiard flow - on $\mathcal{M}^{(per)}$ which is invariant
under $\mathcal{F}^{(per)}$. Denote the restriction of this measure
to the two neighboring tori to the origin by $\bf P$. Note that $\bf P$ is
finite, so normalize it to be a probability measure.\\
Finally, define $\mathcal{J} \subset \mathcal{M}^{(per)}$
as such points on the discrete phase space without any wall,
from which before the forthcoming collision, the particle crosses
 $\cup_{i=1}^K (\mathcal{J}_{i,l}, \mathcal{J}_{i,r})$.
 Note that the finite horizon condition implies that $\mathcal{J}$
 is bounded.\\

Now we proceed to the definition of the limiting processes. (The intuition behind their appearance in our result and in its proof as well will be explained after the formulation of the theorem.) Since we are
going to have two very similar processes, we call both quasi-reflected
Brownian motions and distinguish between them only in the abbreviation.\\
Consider a BM $\mathfrak{B} = (\mathfrak{B}_t)_{ t \in [0,1]}$
with parameter $\sigma$ on $[0,1]$.
Its local time at the origin is denoted by $\mathfrak{L} = (\mathfrak{L}_t)_{
t \in[0,1]}$. That is,
\[ \mathfrak{L}_t = \lim_{\varepsilon \searrow 0} \frac{1}{2 \varepsilon}
\int_{0}^{t} 1_{\{ |\mathfrak{B}_S| < \varepsilon \}} ds.\]
Now, given $\mathfrak B$, consider a  Poisson Point
Process $\Pi$ with intensity measure $c d \mathfrak{L}$ with some positive constant $c$. The intuition behind this process is roughly speaking the following: since the local time describes the relative time process a Brownian motion spends in an infinitesimal neighborhood of a point, in our case of the origin, it can also be interpreted as telling us the density process of number of visits of the origin by the Lorentz process. Out of them only those visits are successful, i. e. resulting in getting to the other side of the wall, when the particle hits the hole, and these instants of time are precisely given by a Poisson process -- according to the Poisson limit law.
With probability
one, the support of the measure $c (d \mathfrak{L})$ is $\mathfrak Z$,
where $\mathfrak Z = \{ s: 0 \leq s \leq 1:
\mathfrak{B}_s =0 \} $ is the zero set of $\mathfrak{B}$.
Denote the points of $\Pi$ by $P_1, P_2, ...$ in decreasing order.
In fact, $\Pi$ has finitely many points. If it has $m$ points, then put
$P_{m+1} = P_{m+2}=...=0$. Further, write $P_0 = 1$ and introduce a
Bernoulli distributed random variable $\eta$ with parameter $1/2$
(where the parameter means the probability of being equal to 1)
 which is independent
  of $\mathfrak{B}$ and $\Pi$.\\
Now, the process $\mathfrak{Q} = (\mathfrak{Q}_t)_{ t \in [0,1]}$ with
$\mathfrak{Q}_0=0$ and
$$
\mathfrak{Q}_t = \left\{ \begin{array}{rl}
  (-1)^{\eta} |\mathfrak{B}_t| &\mbox{ if $\exists n \in \mathbb{Z}_{+} \cup \{0\}:
    t \in ( P_{2n+1},P_{2n}] $} \\
       (-1)^{1- \eta} |\mathfrak{B}_t| &\mbox{ otherwise}
                      \end{array} \right.
$$
is called the quasi-reflected Brownian motion with parameters $c$ and $\sigma$,
and denoted
by qRBM(c,$\sigma$). \\
The definition of QRBM is
similar to that of qRBM. The difference is that
 $c (d \mathfrak{L})$ now should be replaced by $c \frac{1}{\sqrt t} (d\mathfrak{L}_t)$. As a result, the
 Poisson process will have infinitely many points, which accumulate only at the origin.
 Now, denote by $P_1, P_2, \dots$ these points in decreasing order (N. B.:
 there is no smallest one among them), put $P_0=1$ and define $\eta$
 and QRBM(c,$\sigma$) as before.
\begin{Remark}
\label{remark1}
One can easily check the following statements.
The qRBM(c,$\sigma$) is almost surely continuous on $[0,1]$,
homogeneous Markovian but not strong Markovian
(think of the stopping time
$T = \min  \{ t>1/2 : \mathfrak{Q}_t = 0 \} \wedge 1$)
and $\mathfrak{Q}_t$ has Gaussian distribution with mean zero and variance $t \sigma^2$.\\
The QRBM, similarly to the qRBM, is continuous,
Markovian (however not time homogeneous), not strong Markovian,
and has the same one dimensional distributions as qRBM.
Contrary to the qRBM, the QRBM is self similar in the
following sense: if $\mathfrak{Q}_t$ is a QRBM, then
\[  (\mathfrak{Q}_t)_{t \in[0,1/p]} \overset{d}{=}
\left( \frac 1{\sqrt p} \mathfrak{Q}_{pt} \right)_{t \in[0,1/p]}, \]
where $1<p$.\\
Further, one can easily extend the definition of both
processes to $\mathbb{R}_{+}$.

\end{Remark}

As usual, $C[0,1]$ will denote the space of continuous functions and $D[0,1]$ the Skorokhod space over $[0, 1]$ (for the definition of the latter, we
refer to \cite{Billingsley}).
We will also use evident modifications, for instance, $D_{\mathbb{R}^2} [t_0,1]$ will denote
the Skorokhod space of $\mathbb{R}^2$-valued functions over an interval $[t_0, 1]$.\\
Let the function ${\bf W}^{\searrow}_n$ be the following:
${\bf W}^{\searrow}_n(k/n) =
 S^{\searrow}_k/ \sqrt n$
  for $0 \leq k \leq n$ and define ${\bf W}^{\searrow}_n(t)$ for
  $t \in [0,1]$ as its piecewise linear, continuous extension.
Let $\mu^{\searrow}_n$ denote the measure on
 $C[0,1]$ induced by ${\bf W}^{\searrow}_n$, where
 the initial distribution, i.e. the distribution
 of $x$, is given by $\bf P$.
 Analogously, define $\mu^{\equiv}_n$
 with ${\bf W}^{\equiv}_n$, where ${\bf W}^{\equiv}_n(k/n) =
  S^{\equiv}_{k}/ \sqrt n$.\\
Now, we can formulate our main result.

\begin{Theorem}
\label{tetel1}

There are positive constants $\sigma$ and $c_2$ depending only on the
periodic scatterer configuration, such that

 \begin{enumerate}

 \item if $\exists c>0: \alpha_n \sqrt n \rightarrow c$, then $\mu^{\searrow}_n$
 converges weakly to the measure induced by $QRBM(c_2 c, \sigma)$.
 \item if $\exists c>0: \alpha_n \sqrt n \rightarrow c$, then $\mu^{\equiv}_n$
   converges weakly to the measure induced by $qRBM(c_2 c, \sigma)$.
 \item if $\alpha_n \sqrt n \rightarrow 0$, then both $\mu^{\searrow}_n$
 and $\mu^{\equiv}_n$
 converge weakly to the convex combination of the measures induced by RBM
 and -RBM with weights $1/2$.
 \item if $\alpha_n \sqrt n \rightarrow \infty$, then both $\mu^{\searrow}_n$
  and $\mu^{\equiv}_n$
 converge weakly to the Wiener measure.
 \end{enumerate}

\end{Theorem}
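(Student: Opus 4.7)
The plan is to couple the walled Lorentz processes $S^{\searrow}_n$ and $S^{\equiv}_n$ to the unwalled periodic process $S^{(per)}_n$ via an independent Bernoulli scheme attached to each would-be crossing of the symmetry axis, and then to deduce all four cases from a joint invariance principle for the periodic trajectory and its number of visits to the wall region. Since the scatterer configuration is symmetric about the $y$-axis, a reflection off the wall is realized by mirroring the subsequent segment of the periodic trajectory; the hole location being uniform on $W_\infty$ and independent of the dynamics, each visit to $\mathcal J$ contributes, conditionally on the periodic trajectory, an independent Bernoulli trial of success probability $\alpha_k/c_1$ (case 1) or $\alpha_n/c_1$ (case 2). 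Thus one realizes $|S^{\searrow}_k| = |S^{\equiv}_k| = |S^{(per)}_k|$ on a common probability space, with the sign of $S^{\searrow}_k$ flipping at every successful crossing; this is the discrete analogue of the qRBM and QRBM constructions with $\eta$ the initial side.

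The content of the theorem then reduces to the joint convergence of the rescaled periodic trajectory and the rescaled set $\mathcal T_n^{\mathrm{succ}} \subset [0,1]$ of successful crossing times to the pair $(\mathfrak B, \Pi)$ in the definition of (Q)qRBM. The first component is handled by the CLT for the periodic Lorentz process of \cite{D-Sz-V}, giving $n^{-1/2} S^{(per)}_{\lfloor nt \rfloor} \Rightarrow \mathfrak B_t$ in $C[0,1]$ with some variance $\sigma^2$. For the joint statement one needs an invariance principle for the number of visits to the wall region:
\[
\Bigl( n^{-1/2} S^{(per)}_{\lfloor nt \rfloor}, \; n^{-1/2} N_n(t) \Bigr)_{t \in [0,1]} \Rightarrow \bigl( \mathfrak B_t, \; c_2 c_1 \mathfrak L_t \bigr)_{t \in [0,1]},
\]
where $N_n(t) = \#\{1 \le k \le \lfloor nt \rfloor : (\mathcal F^{(per)})^{k-1} x \in \mathcal J\}$ and $c_2$ is a geometric constant depending only on the periodic configuration. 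This is the local time result for the Lorentz process announced in the abstract; the natural route is a Tanaka-type decomposition of $|S^{(per)}_n|$ into a martingale part and a local-time-like increasing part, combined with the hyperbolic billiard machinery used in \cite{D-Sz-V}, to establish functional tightness in $C[0,1]$ and to identify the limit as $c_2 c_1 \mathfrak L$.

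With the joint invariance principle in hand, the passage from Bernoulli thinning to a Poisson point process is classical. Each visit is kept with vanishing probability $p_k \to 0$, so by the Poisson limit theorem applied conditionally on the periodic trajectory on refining grids of $[0,1]$, $\mathcal T_n^{\mathrm{succ}}$ converges jointly with $\mathfrak B$ to a Cox point process whose conditional intensity measure is $c_2 c\, d\mathfrak L_t$ in case 2 and $c_2 c\, t^{-1/2}\, d\mathfrak L_t$ in case 1, the $t^{-1/2}$ factor arising from $\alpha_k \sim c/\sqrt k$ evaluated at $k = \lfloor nt \rfloor$. Almost surely every limit point lies in the zero set $\mathfrak Z$ of $\mathfrak B$, so the map $(\mathfrak B, \Pi, \eta) \mapsto \mathfrak Q$ is a.s.\ continuous at the limit — sign flips happen exactly where $|\mathfrak B|$ vanishes and therefore do not introduce jumps — and the continuous mapping theorem yields cases 1 and 2. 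Cases 3 and 4 are degenerations of this picture: if $\alpha_n \sqrt n \to 0$, then the expected number of successful crossings on $[0,1]$ tends to $0$, so with probability tending to one $\mathcal T_n^{\mathrm{succ}} = \emptyset$, the sign is frozen at $(-1)^{1-\eta}$, and the limit is the $1/2$-$1/2$ mixture of $\pm$RBM; if $\alpha_n \sqrt n \to \infty$, then the number of successes tends to $\infty$ on every subinterval of positive $\mathfrak L_t$, the sign equidistributes on $\{\pm 1\}$ within each excursion, and the walled process loses all memory of the wall to produce $\mathfrak B$ itself in the limit.

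The main obstacle is the joint local-time invariance principle of the second paragraph. The one-dimensional CLT for $S^{(per)}_n$ is available, and second-moment estimates for $N_n(1)$ are within reach via the existing decay-of-correlations bounds, but functional tightness of $n^{-1/2} N_n(\cdot)$ in $C[0,1]$ together with identification of its limit as $c_2 c_1 \mathfrak L$ is delicate. One route is a pathwise Tanaka identity coupled to a Young tower approximation of the periodic system $(\mathcal F^{(per)}, \mathbf P)$; another is a sharp local CLT for $S^{(per)}_n$ from which the expected number and the covariances of visits to $\mathcal J$ can be computed directly. Once this principle is established, everything else — the Bernoulli-to-Poisson passage, the continuous mapping step, and the degenerate regimes 3 and 4 — is essentially soft.
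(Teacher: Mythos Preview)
Your proposal is correct and follows essentially the same architecture as the paper: the symmetry coupling giving $|S^{\searrow}_k|=|S^{(per)}_k|$, the joint invariance principle for position and visits to $\mathcal J$, Bernoulli-to-Poisson thinning (the paper uses Le Cam's inequality) and a continuous mapping argument, with cases 3 and 4 as degenerations. The paper proves the joint local-time invariance principle (its Proposition~3) via your second suggested route --- the multiple local CLT for $S^{(per)}_n$ together with a method-of-moments computation --- rather than a Tanaka decomposition; it also works first on $C[t_0,1]$ to keep the $\tau^{-1/2}$-weighted integral tame, and for case~4 it replaces your equidistribution heuristic by an explicit two-state Markov chain calculation showing $\prod_k(1-2p_k)\to 0$ forces the sign to $1/2$.
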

	
Returning to the intuitive picture provided at the introduction of the process $qRBM(c_2 c, \sigma)$, it, indeed, explains statement 2 of the theorem. Since, in the setup of the definition $\mu^{\searrow}_n$, the holes are not uniformly small, but are only decreasing as of order $\frac{1}{\sqrt n}$, the chances to get over the wall are larger but also decreasing as in the definition of $QRBM(c_2 c, \sigma)$.

Instead of introducing the holes on the wall one could think about
the wall as a {\it trapdoor}, i.e. sometimes it is open and then the particle
crosses it without collisions, other times it is closed. If one opens
the door randomly with probability $\alpha_n /c_1$, then obtains the same
result.

The analogue of Theorem \ref{tetel1} for random walks is,
of course, easy to formulate in the following way.
 Define the stochastic process $\mathfrak{S}_n$ by:
 $Prob (\mathfrak{S}_0 = 1) = Prob
 (\mathfrak{S}_0 = -1) = 1/2$
 and for $k > 0$:
 \[ Prob (\mathfrak{S}_{k+1} = \mathfrak{S}_{k} + 1 |
 \mathfrak{S}_{k} \neq 0  )=
 Prob (\mathfrak{S}_{k+1} =
 \mathfrak{S}_{k} - 1 | \mathfrak{S}_{k}
 \neq 0  ) = 1/2, \]
 and
 \begin{equation}
 Prob (\mathfrak{S}_{k+1} = \mathfrak{S}_{k-1} |
 \mathfrak{S}_{k} = 0)= 1 - \epsilon, \label{egy}
 \end{equation}
 \begin{equation}
 Prob (\mathfrak{S}_{k+1} = - \mathfrak{S}_{k-1} |
 \mathfrak{S}_{k} = 0 )=  \epsilon. \label{ketto}
 \end{equation}

Here - and also in the sequel - $Prob$ stands for some abstract probability measure.

In the definition of $\mathfrak{S}_k$ put first $\epsilon = \alpha_k$ and
denote by $\nu^{\searrow}_n$ the measure on
$C[0,1]$ induced by ${\bf W}_n$, where ${\bf W}_n(k/n)
=  \mathfrak{S}_k/ \sqrt n$ for $0 \leq k
\leq n$ and is linearly interpolated in between. Analogously, define
$\nu^{\equiv}_n$ for each $n$ with the choice $\epsilon = \alpha_n$.
Then, if we replace each $\mu$ with $\nu$ in Theorem \ref{tetel1}, then the statement
remains true (with $\sigma = c_2 =1$),
and can be proven the same way as we prove Theorem \ref{tetel1}.\\

In the next section, we discuss some results concerning the periodic
Lorentz process, that are necessary for proving Theorem \ref{tetel1}.
Finally, Section 3 contains the actual proof of Theorem \ref{tetel1}.

\section{Limit theorems for the periodic Lorentz Process}

In this section, we present some facts about the periodic Lorentz process
in a strip. Whereas Proposition 1 is simply a strengthening of Theorem 4.2 of \cite{Sz-V}, Proposition 3 is a completely new statement interesting in itself. For later reference, we need to introduce some
abstract stochastic processes.\\
As before, $\mathfrak B =(\mathfrak B_t)_{ t \in [0,1]}$ denotes
a BM with parameter $\sigma$ (to be specified later)
and $\mathfrak L =(\mathfrak L_t)_{ t \in [0,1]}$ is its local
time at the origin.
We also use the notation
$\mathfrak B^{a,t_0} =(\mathfrak B^{a,t_0}_t)_{ t \in [t_0,1]}$
for a BM  with
parameter $\sigma $ starting from $a$ at time $t_0$; and
$\mathfrak L^{a,t_0} =(\mathfrak L^{a,t_0}_t)_{ t \in [t_0,1]}$
denotes its local time at the origin.
Finally,
$\mathfrak B^{a,t_0 \leadsto b, t_1} =
(\mathfrak B^{a,t_0 \leadsto b, t_1}_t)_{ t \in [t_0,t_1]}$
stands for a Brownian bridge with
parameter $\sigma $ starting from $a$ at time $t_0$ and arriving at
$b$ at time $t_1$ (that is heuristically a BM with pinned down endpoints),
and
$\mathfrak L^{a,t_0 \leadsto b, t_1} =
(\mathfrak L^{a,t_0 \leadsto b, t_1}_t)_{ t \in [t_0,t_1]}$
is the local time of $\mathfrak B^{a,t_0 \leadsto b, t_1}$
at the origin. For a thorough description of all these
processes, see \cite{RY}.\\
Similarly to the previous notations, denote by
$L_{nt},\ t \in [0,1]$ the number of
visits to $\mathcal{J}$ in the time interval
$[1, \lfloor nt \rfloor] $, and
$L_{H}$ is
the number of
visits to $\mathcal{J}$ in the time interval
$H$.\\

The first statement is a local limit theorem, formulated
in a fashion tailored to our purposes. For this, let $\phi$ denote
the density of the standard normal law. Now, the assertion reads
as follows.

\begin{Proposition}
\label{LLT}
Fix some positive integer $k$ and a subset
$\mathcal{Z}$ of the set $\{ 1, 2, \dots k\}$. For all
$i \in \{ 1, 2, \dots k\} \setminus \mathcal Z$,
let $t_i \in [0,1]$, $b^{(i)} \in \mathbb{R}$
be real numbers such that if
$i <j$ with $i,j \in \mathcal Z$, then $t_i < t_j$.
Write $b_n^{(i)} :=
\lfloor b^{(i)} \sqrt n\rfloor$ and $n_i = \lfloor n t_i \rfloor$ for
any positive integer $n$.
Define $n_0=b_n^{(0)}=0$. For
$i' \in \mathcal Z$, write $b_n^{(i')}=0$ and
choose some sequences $n_{i'}$ such that for any $i,j \in \{ 1, 2, \dots k\}$
with $i <j$, $n_i \leq n_j$ holds. Then
\begin{eqnarray*}
&{\bf P}& \big( \forall i \in \{ 1, 2, \dots k\} \setminus \mathcal Z:
\lfloor S^{(per)}_{n_i}(x) \rfloor
= b_{n}^{(i)};
\forall i' \in \mathcal Z:
\left( \mathcal{F}^{(per)}\right)^{n_{i'}}(x)
\in \mathcal{J} \big)\\
&=& c_0^{|\mathcal{Z}|} \prod_{i=1}^k \frac{\phi(
\frac{b_n^{(i)}-b_n^{(i-1)}}{\sigma \sqrt{n_i-n_{i-1}}})+ o_i(1)}
{\sigma \sqrt{n_i-n_{i-1}}},
\end{eqnarray*}
with some constants $\sigma$ and $c_0$ depending only
on the periodic scatterer configuration. Further, there exist
a sequence $\varsigma(n) \rightarrow 0$,
such that
$|o_i(1)| < \varsigma(n_i - n_{i-1})$ for all $i \in \{ 1, 2, \dots k\}$.
\end{Proposition}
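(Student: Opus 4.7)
The plan is to bootstrap Theorem 4.2 of \cite{Sz-V}, which gives the single-time LLT for the horizontal displacement of the periodic Lorentz process, into the multi-time joint LLT with phase-space constraints stated above. I would organize the argument in three stages, proceeding by induction on $k$ and crucially using a spectral-gap / transfer-operator refinement of the original LLT.

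The first stage is a refined single-time LLT. For any $\mathbf{P}$-continuity set $A \subset \mathcal{M}^{(per)}$ and integers $b_n$ with $b_n/\sqrt{n}$ bounded, I would establish
\[
\mathbf{P}\bigl(\lfloor S^{(per)}_n(x)\rfloor = b_n,\ \left(\mathcal{F}^{(per)}\right)^n(x) \in A\bigr) = \bigl(\mathbf{P}(A)+o(1)\bigr)\cdot\frac{\phi(b_n/(\sigma\sqrt n))}{\sigma\sqrt n}.
\]
This follows from the standard spectral proof of the LLT for billiards: the transfer operator $\mathcal{L}$ twisted by the Fourier dual $t$ of the horizontal displacement $\kappa$ has, for small $t$, a simple leading eigenvalue $\lambda(t) = 1 - \tfrac{1}{2}\sigma^2 t^2 + O(t^3)$ separated by a spectral gap; integrating its $n$-th iterate against $\mathbf{1}_A$ on one side and against the density of $\mathbf{P}$ on the other then produces the Gaussian density on the displacement side and the invariant mass $\mathbf{P}(A)$ on the phase side. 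Setting $A = \mathcal{J}$ and $b_n = 0$ gives the single-time case of the proposition with $c_0 = \mathbf{P}(\mathcal{J})$.

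The second stage is multi-time factorisation. I would iterate stage one over the $k$ consecutive blocks $(n_{i-1}, n_i]$ by conditioning on the phase point at each intermediate time. The key observation is that after applying stage one to a block, the conditional distribution of $\left(\mathcal{F}^{(per)}\right)^{n_i}(x)$ given the constraint on that block lies within exponentially small error (in the length of the subsequent block) of $\mathbf{P}$, possibly restricted to $\mathcal{J}$ and renormalised when $i \in \mathcal{Z}$. Hence the next block can be analysed as a fresh single-time joint LLT starting from (a small $L^\infty$-perturbation of) the density of $\mathbf{P}$. Carrying out this induction step-by-step produces the claimed product of Gaussian densities, together with a factor $\mathbf{P}(\mathcal{J})$ for each $i' \in \mathcal{Z}$.

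The main obstacles lie in uniformity. First, one needs $\varsigma(m)\to 0$ uniformly over the class of initial densities appearing in the iteration, which requires the spectral-gap estimate to be uniform in initial data with bounded density. Second, applying the LLT with the discontinuous test function $\mathbf{1}_{\mathcal{J}}$ demands a Portmanteau-type approximation argument; for this I would use that $\partial \mathcal{J}$ consists of orbits hitting the endpoints of $W_\infty$ and hence has zero $\mathbf{P}$-measure under the finite-horizon assumption, so that $\mathcal{J}$ is indeed a $\mathbf{P}$-continuity set, and sandwich $\mathbf{1}_\mathcal{J}$ between H\"older-continuous functions to which the twisted transfer operator machinery directly applies. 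These two uniformity issues are what prevent the proof from being a mere formal iteration of Theorem 4.2 of \cite{Sz-V}.
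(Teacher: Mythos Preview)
The paper does not actually give a proof of this proposition: it simply remarks that the statement extends Theorem 4.2 of \cite{Sz-V} in two respects (from $k\le 2$ to arbitrary $k$, and to errors uniform in the choice of $n_i$) and asserts that ``both generalizations follow from the proof presented in \cite{Sz-V}''. Your proposal is therefore not competing with any argument in the paper but rather filling in what the paper leaves to the reader, and the route you outline---the twisted transfer operator / spectral gap machinery of \cite{Sz-V}, iterated block by block with attention to uniformity in the initial density---is precisely the proof the paper is pointing to. Your identification of the two nontrivial points (uniformity of the $o(1)$ over the class of starting densities arising in the iteration, and handling the indicator $\mathbf{1}_{\mathcal J}$ rather than a H\"older observable) matches exactly the two aspects the paper singles out as going beyond the literal statement of Theorem 4.2 in \cite{Sz-V}.

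One small caution: in your second stage you phrase the iteration as ``the conditional distribution of the phase point after a block is exponentially close to $\mathbf P$'', which is morally right but not literally how the \cite{Sz-V} argument runs. There one does not condition and then restart; rather one writes the full $k$-fold probability directly as an iterated Fourier integral of a product of twisted transfer operators and uses the spectral decomposition inside each block, so the ``restarting density'' is the rank-one eigenprojection applied to whatever comes from the previous block. This is the natural way to get the uniformity in $n_i-n_{i-1}$ and avoids the (awkward) step of verifying regularity of a true conditional density. With that reformulation your sketch is correct and coincides with what the paper has in mind.
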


Proposition \ref{LLT} is an extension of Theorem
4.2 in \cite{Sz-V} in two aspects. On the one hand, it is formulated
for $k$-tuples, while in \cite{Sz-V} it is only stated for
$k=1,2$. On the other hand, the error term is claimed to be uniform
in the choice of $n_i$ (it is, in fact, uniform in more general
choices of $b_n^{(i)}$, but we only use it for $b_n^{(i)}$ of
the form presented in Proposition \ref{LLT}). Both generalizations
follow from the proof presented in \cite{Sz-V}, thus we do not provide
a formal proof here. We also note that Proposition \ref{LLT} is an
extension of Proposition 3.6 in \cite{recprop}, too.
From now on, all stochastic processes derived
from the BM will have parameter $\sigma$ of Proposition \ref{LLT}.
\\
The next important fact is the weak invariance principle for the position,
which was first proven in \cite{BS} and \cite{BSC}.

\begin{Proposition}
\label{WIP}
\[\left( \frac{S^{(per)}_{nt}}{\sqrt n} \right)_{t \in [0,1]} \Rightarrow
(\mathfrak{B}_t)_{t \in [0,1]},\]
where $\Rightarrow$ stands for weak convergence in the space $C[0,1]$.
\end{Proposition}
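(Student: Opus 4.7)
The plan is the classical Donsker strategy: prove (i) convergence of finite-dimensional distributions to those of $\mathfrak{B}$, and (ii) tightness of the sequence of laws of $S^{(per)}_{\cdot}/\sqrt{n}$ on $C[0,1]$. By Prokhorov, these together yield weak convergence.

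For finite-dimensional convergence, the tool at hand is Proposition \ref{LLT} with $\mathcal{Z} = \emptyset$. Fix $0 = t_0 < t_1 < \cdots < t_k \leq 1$ and intervals $A_i = (a_i, a_i'] \subset \mathbb{R}$. Setting $n_i = \lfloor n t_i \rfloor$, I would write
\begin{align*}
\mathbf{P}\Bigl( \tfrac{S^{(per)}_{n_i}(x)}{\sqrt{n}} \in A_i,\ i=1,\dots,k \Bigr) = \sum_{(b_n^{(i)})} \mathbf{P}\bigl( \lfloor S^{(per)}_{n_i}(x) \rfloor = b_n^{(i)},\ i=1,\dots,k \bigr),
\end{align*}
where the sum ranges over integer tuples with $b_n^{(i)} \in (a_i \sqrt{n}, a_i' \sqrt{n}]$. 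Proposition \ref{LLT} replaces each summand by $\prod_{i=1}^k \frac{\phi((b_n^{(i)} - b_n^{(i-1)})/(\sigma \sqrt{n_i - n_{i-1}})) + o_i(1)}{\sigma \sqrt{n_i - n_{i-1}}}$, and a Riemann-sum argument (justified by the uniformity of the error $\varsigma(\cdot)$) converts the right-hand side into the multivariate Gaussian density of the increments $(\mathfrak{B}_{t_1} - \mathfrak{B}_{t_0}, \dots, \mathfrak{B}_{t_k} - \mathfrak{B}_{t_{k-1}})$. This gives convergence of the fdd.

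For tightness in $C[0,1]$ of the piecewise linear interpolations, I would use Billingsley's moment criterion, which in this discrete setting reduces to a bound of the form
\begin{align*}
\mathbf{E}\bigl[ |S^{(per)}_{n+m}(x) - S^{(per)}_n(x)|^4 \bigr] \leq C m^2
\end{align*}
uniform in $n, m$. Since the system $(\mathcal{M}^{(per)}, \mathcal{F}^{(per)}, \mathbf{P})$ enjoys exponential decay of correlations for piecewise Hölder observables (via the Young tower construction), and $\kappa$ is a bounded, centered, piecewise Hölder observable, this estimate follows by expanding the fourth power into a fourfold sum of correlations and controlling the long-range contributions by exponential decay. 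The increments over large blocks therefore satisfy the required Chentsov-type bound, and tightness of the polygonal interpolations in $C[0,1]$ follows.

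The main obstacle is precisely the fourth-moment estimate, because $\kappa$ has jump discontinuities along the (countable) singularity set of $\mathcal{F}^{(per)}$ and fails to be globally Hölder; the finite-horizon hypothesis is essential here to guarantee that $\kappa$ is bounded and that the singular set is manageable. This is exactly the analytic content supplied by the billiard literature initiated in \cite{BS, BSC} and refined through subsequent work on Young towers. Since Proposition \ref{WIP} is a classical consequence of those tools, my proof would ultimately consist of extracting from them the moment bound and assembling it with the fdd convergence outlined above.
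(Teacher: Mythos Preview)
The paper does not prove Proposition~\ref{WIP} at all: it is stated as a known fact, with the sentence ``which was first proven in \cite{BS} and \cite{BSC}'' serving as the entire justification. So there is no proof in the paper to compare your proposal against.

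Your outline is a reasonable reconstruction of how such an invariance principle is established in the dispersing-billiard setting, and the tightness half of your argument (fourth-moment bound via exponential decay of correlations for piecewise H\"older observables, with the finite-horizon assumption ensuring boundedness of $\kappa$) is precisely the mechanism in \cite{BS,BSC} and its later refinements. One small remark: deriving the finite-dimensional distributions from Proposition~\ref{LLT} is logically permissible but somewhat backwards, since the local limit theorem of \cite{Sz-V} is a sharper and historically later statement than the weak invariance principle you are trying to prove; the original works obtain the CLT (and hence fdd convergence) directly via Markov approximations and characteristic-function estimates, not by summing a local limit theorem. Also note that Proposition~\ref{LLT} as stated fixes $b_n^{(i)}=\lfloor b^{(i)}\sqrt n\rfloor$ for a \emph{single} real $b^{(i)}$, so to carry out your Riemann-sum argument you would need the stronger uniformity in $b_n^{(i)}$ that the paper mentions in passing (``it is, in fact, uniform in more general choices of $b_n^{(i)}$'') but does not formally include in the proposition.
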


The novelty of this section is in fact the following statement.
The position
of the particle and its local time at $\mathcal J$ jointly converge
to a BM and its local time at the origin (the latter being multiplied
by a constant). Formally,

\begin{Proposition}
\label{allitas1}
\[\left( \frac{S^{(per)}_{nt}}{\sqrt n}, \frac{L_{nt}}{\sqrt n} \right)_
{t \in [0,1]} \Rightarrow (\mathfrak{B}_t, c_0
\mathfrak{L}_t)_{t \in [0,1]}, \]
as $n \rightarrow \infty$ where the left hand side is understood
as a random variable with respect to the probability measure $\bf P$, and
$\Rightarrow$ stands for
weak convergence in the Skorokhod space $D_{\mathbb{R}^2} [0,1]$.
\end{Proposition}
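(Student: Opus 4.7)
My plan is the standard two-step strategy: establish joint tightness in $D_{\mathbb R^2}[0,1]$ of $(S^{(per)}_{nt}/\sqrt n,\, L_{nt}/\sqrt n)$, and then identify the limit through convergence of finite-dimensional distributions using Proposition \ref{LLT}.

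For tightness, Proposition \ref{WIP} already provides $C$-tightness of the first coordinate. For $L_{nt}/\sqrt n$ the key observation is that $L_{nt}$ is nondecreasing in $t$, which reduces tightness to controlling a single marginal. Applying Proposition \ref{LLT} with $k=1,\ \mathcal Z=\{1\}$ gives $\mathbf P\bigl((\mathcal F^{(per)})^j x\in\mathcal J\bigr)=O(j^{-1/2})$, hence $E[L_n/\sqrt n]=O(1)$; monotonicity together with the continuity of the prospective limit $c_0\mathfrak L_t$ then upgrades this to $C$-tightness of the second coordinate, yielding joint $C$-tightness.

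For the finite-dimensional distributions, I would verify the mixed-moment convergence
\[
E\Big[\prod_i f_i\Big(\tfrac{S^{(per)}_{nt_i}}{\sqrt n}\Big)\prod_j\Big(\tfrac{L_{nt_j}}{\sqrt n}\Big)^{m_j}\Big]\ \to\ E\Big[\prod_i f_i(\mathfrak B_{t_i})\prod_j(c_0\mathfrak L_{t_j})^{m_j}\Big]
\]
for arbitrary $0<t_1<\dots<t_k\le 1$, bounded continuous $f_i$, and nonnegative integers $m_j$. Expanding each $L^{m_j}_{nt_j}$ as an iterated sum over tuples of visit indices and conditioning on the positions at times $nt_i$, each term becomes a joint probability of exactly the type computed by Proposition \ref{LLT}. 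The product structure of that proposition matches a product of Gaussian transition densities, with every visit index contributing a factor $c_0$ and a local density $\phi(0)/(\sigma\sqrt{\cdot})$. Recognizing the sums over visit indices as Riemann sums for the Brownian occupation density at the origin, and invoking $E[\mathfrak L_t]=\int_0^t\phi_s(0)\,ds$ together with its multi-time analogues (obtained by iterated conditioning via the Markov property), produces precisely the right-hand side above. Since the joint law of $(\mathfrak B,c_0\mathfrak L)$ at finitely many times is determined by its mixed moments, this gives fdd convergence.

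The main obstacle is handling the diagonal contribution to the multinomial expansion, where some visit indices coincide and Proposition \ref{LLT} does not directly apply. A counting bound is enough: there are $O(n^{m-1})$ diagonal tuples among the $O(n^m)$ total, and each can be estimated by Proposition \ref{LLT} applied to its reduced tuple of distinct indices, giving a diagonal contribution of order $n^{(m-1)/2}$. After the normalization $n^{-m/2}$ this is $O(n^{-1/2})$, hence negligible. Combining fdd convergence with joint tightness then yields the claimed weak convergence in $D_{\mathbb R^2}[0,1]$.
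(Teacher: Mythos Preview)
Your strategy coincides with the paper's: tightness via Proposition~\ref{WIP} plus monotonicity of $L_{nt}$, and finite-dimensional distributions via the method of moments using Proposition~\ref{LLT}. The difference lies in how the moment computation is organized. The paper first \emph{conditions} on $\lfloor S^{(per)}_{nt_i}/\sqrt n\rfloor$ taking a prescribed value, proves that the conditional moments of $L/\sqrt n$ converge to the moments of a \emph{Brownian bridge} local time (Lemma~\ref{slemma1}), and then unconditions via a Fatou argument. Because matching the explicit integral for the limiting moments against the known distribution of the bridge local time turned out to be delicate, the paper makes a detour: it observes that a simple random walk produces the same asymptotic moment integrals, and borrows the identification from the random-walk literature. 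Your packaging, by contrast, computes the unconditional mixed moments directly and identifies the limit through the occupation-density formula $E\bigl[\prod_i f_i(\mathfrak B_{t_i})\prod_j\mathfrak L_{t_j}^{m_j}\bigr]$ expressed as an iterated integral of transition kernels; this sidesteps the bridge identification entirely and is more transparent.

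Two places in your sketch need the same care the paper provides. First, Proposition~\ref{LLT} carries error terms $o_i(1)$ bounded by $\varsigma(n_i-n_{i-1})$; when you pass from the sum over visit indices to the Riemann integral, the contribution of tuples with some small gap $n_i-n_{i-1}$ must be shown to be negligible. The paper does this explicitly (splitting according to which gaps are below a threshold $K$); your proposal should acknowledge that this is where the uniform error bound in Proposition~\ref{LLT} is actually used. Second, your assertion that the mixed moments determine the joint law of $(\mathfrak B,c_0\mathfrak L)$ at finitely many times requires a growth bound on the moments of $\mathfrak L$; the paper records this as $\limsup_k(\mathbb I^k_n/(k!\,n^{k/2}))^{1/k}<\infty$, which follows directly from the integral representation, and you should state the analogous check.
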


\begin{proof}[Proof of Proposition \ref{allitas1}]

As usual, one has to check the convergence of finite
dimensional distributions and the tightness (see \cite{Billingsley}
for conditions implying weak convergence on some function spaces).\\
First, we prove the convergence of the finite dimensional distributions.
Note that the convergence of the first coordinate follows
from Proposition \ref{WIP} (and even from Proposition \ref{LLT}),
while the convergence of the second coordinate follows from an extended
version of the proof of Theorem 9 in \cite{recprop}. But the joint convergence
is a stronger statement then the convergence of the individual coordinates,
and it
requires a formal proof.\\
To obtain the joint convergence, first observe that the convergence of
the first coordinate (the rescaled position)
is well known, even in the local sense (eg. Proposition \ref{LLT}). Thus
we are going to prove that under the condition that the rescaled position is close
to some specific number, the second coordinate converges to the desired limit.
In order to do this computation, we need to define some new measures on $\mathcal{M}^{(per)}$.\\
 First,
 choose $0 < t_0 <1 $, $a\in \mathbb{R}$ and write
 $a_n = \lfloor \sqrt n a \rfloor $.
 Restrict the measure $\bf P$ to such points $x$ where
 $\lfloor S^{(per)}_{\lfloor nt_0 \rfloor}(x) \rfloor = a_n$
 and rescale it to obtain a
 probability measure. The resulting measure is denoted by $\bf P_n$.
 Thus, with the notation
 \[ {\mathcal A_1}= \mathcal A_1 (n) = \{ x: \lfloor S^{(per)}_{\lfloor nt_0 \rfloor}(x) \rfloor = a_n\}
 \subset \mathcal{M}^{(per)},\]
 for $M \subset \mathcal{M}^{(per)}$ measurable sets,
 ${ \bf P_n} (M) = { \bf P} (M \cap \mathcal A_1) / {\bf P} ( \mathcal A_1)$.
Then, choose $t_0 < t_1  <1 $, $b\in \mathbb{R}$ and write
 $b_n = \lfloor \sqrt n b \rfloor $.
Define
$\bf Q_n$ as the conditional measure of $\bf P_n$ on such points $x$,
where $\lfloor S^{(per)}_{\lfloor nt_1 \rfloor} (x)  \rfloor  =b_n$.
That is, with the notation
\[ \mathcal A_2 = \mathcal A_2(n) = \{ x: \lfloor S^{(per)}_{\lfloor nt_1 \rfloor}(x) \rfloor = b_n\}
\subset \mathcal{M}^{(per)},\]
for $M \subset \mathcal{M}^{(per)}$ measurable sets,
${\bf Q_n} (M) = {\bf P_n} (M \cap \mathcal A_2) / {\bf P_n} (\mathcal A_2)$.\\
Now, we prove the following lemma.

\begin{Lemma}
\label{slemma1}
\[ L_{ [n t_0, n t_1 ]} / (c_0 \sqrt n) \Rightarrow
\mathfrak{L}^{a,t_0 \leadsto b, t_1}_{t_1},\]
where $L_{ [nt_0, nt_1]} / (c_0 \sqrt n)$
is understood as a random variable with respect to $\bf Q_n$.
Similarly,
\[ L_{ nt_0 } / (c_0 \sqrt n) \Rightarrow
\mathfrak{L}^{0,0 \leadsto a, t_0}_{t_0},\]
where $L_{ t_0 } / (c_0 \sqrt n)$
is understood as a random variable with respect to $\bf P_n$.

\end{Lemma}


\begin{proof}[Proof of Lemma \ref{slemma1}]
We prove only the first statement, since the second one can be proven
analogously. Similarly to the proof of Theorem 9 in \cite{recprop}, we are
going to use the method of moments (see \cite{Billingsley},
Chapter 1.7, Problem 4., for instance). That is,  we are going to estimate
\[ \mathbb{I}_n^{k} := \int ( L_{ [n t_0, n t_1]})^k d{\bf Q_n}. \]
For some fixed positive integer $k$ and for $\lfloor nt_0 \rfloor =n_0
< n_1< n_2< ...< n_k <
n_{k+1} = \lfloor nt_1 \rfloor$, define the set
\[ \mathcal A_3 = \mathcal{A}_3 (n_1, \dots, n_k)
= \{ x: \{ \left( \mathcal{F}^{(per)} \right)^{n_i}x,  1 \leq i \leq k \}
\subset \mathcal{J} \} \subset \mathcal{M}^{(per)}.\]
Representing $L_{[n t_0, n t_1 ]}$ as a sum of
$\lfloor n t_1 \rfloor - \lfloor n t_0 \rfloor + 1$ indicator variables,
one concludes
\begin{equation}
\label{sublemma_2012_1}
\mathbb{I}_n^{k} \sim k! \sum_{
\lfloor nt_0 \rfloor =n_0
< n_1< n_2< ...< n_k <
n_{k+1} = \lfloor nt_1 \rfloor }
{\bf Q_n} ( \mathcal A_3(n_1, \dots n_k)).
\end{equation}
In fact, there should be $k-1$ similar sums,
for $n_1 < \dots < n_l$, $1 \leq l \leq k-1$, respectively, but the contribution
of them is of smaller order of magnitude, as we will see in the forthcoming
computation.
Thus, we need to estimate ${\bf Q_n} (\mathcal A_3(n_1, \dots n_k))$.
By definition,
\begin{equation}
\label{sublemma_2012_2}
 {\bf Q_n} ( \mathcal{A}_3) = \frac{{\bf P}(\mathcal{A}_1 \cap
 \mathcal{A}_2 \cap \mathcal{A}_3 ) }{{\bf P}(\mathcal{A}_1 \cap
 \mathcal{A}_2 )}.
\end{equation}
Using Proposition \ref{LLT}, one obtains the asymptotic equalities
\begin{eqnarray}
&&\frac{{\bf P}(\mathcal{A}_1 \cap
 \mathcal{A}_2 \cap \mathcal{A}_3 )}{{\bf P} (\mathcal A_1)} \nonumber \\
 &\sim&
 \frac{c_0^k}{\sigma^{k+1} \left( 2 \pi \right)^{\frac{k-1}{2}}}
 \phi \left( \frac{a_n}{\sigma \sqrt{n_1-n_0}} \right)
 \phi \left( \frac{b_n}{\sigma\sqrt{n_{k+1}-n_k}} \right)
 \prod_{i=1}^{k+1} \frac{1}{\sqrt{n_i-n_{i-1}}}, \label{sublemma_2012_3}
\end{eqnarray}
and
\begin{equation}
\label{sublemma_2012_4}
\frac{{\bf P}(\mathcal{A}_1 \cap
 \mathcal{A}_2  )}{{\bf P} (\mathcal A_1)} \sim
 \frac {\phi \left( \frac{b_n - a_n}{\sigma \sqrt{n(t_1-t_0)}} \right) }
 {\sigma \sqrt{n (t_1-t_0)}}.
\end{equation}
Next, we substitute (\ref{sublemma_2012_2}) by the product of the
right hand sides of (\ref{sublemma_2012_3}) and (\ref{sublemma_2012_4})
in the sum of (\ref{sublemma_2012_1}). The resulting sum is a Riemann
sum which is asymptotically equal to the following Riemann integral
\begin{eqnarray}
&&
n^{\frac{k}{2}}
 c_0^k k! \sigma^{-k} (2 \pi)^{-\frac{k-1}{2}} \sqrt{t_1-t_0}
 \left[ \phi\left( \frac{b-a}{\sigma \sqrt{t_1-t_0}}\right)\right]^{-1}
 \label{3soros} \\
 &&{\int \dots \int}_{0 <s_1<s_2<...<s_k<t_1-t_0 } d \underline{s} \nonumber \\
 && \phi\left( \frac{a}{\sigma \sqrt{s_1}}\right)
 \frac{1}{\sqrt{s_1}} \frac{1}{\sqrt{s_2-s_1}} \dots
 \frac{1}{\sqrt{s_k-s_{k-1}}}
 \frac{1}{\sqrt{t_1-t_0-s_k}}
 \phi\left( \frac{b}{\sigma \sqrt{t_1-t_0-s_k}}\right), \nonumber
 \end{eqnarray}
 where $\underline{s}=(s_1, \dots, s_k)$.
Note that when we substituted (\ref{sublemma_2012_2}) by the product of the
right hand sides of (\ref{sublemma_2012_3}) and (\ref{sublemma_2012_4}),
we made an error. Due to Proposition \ref{LLT}, this error
is bounded by
\[C \sqrt n \varsigma(\min_{i}\{ n_i - n_{i-1}\})
\prod_{j=1}^{k+1} \frac{1}{\sqrt{n_j - n_{j-1}}},\]
with some constant
$C$.
Thus, in order to see that (\ref{3soros}) is asymptotically equal to
$\mathbb{I}_n^{k}$, it remains to prove that
\begin{equation}
\label{3soros_indoklasa}
\sum_{
\lfloor nt_0 \rfloor =n_0
< n_1< n_2< ...< n_k <
n_{k+1} = \lfloor nt_1 \rfloor }
\sqrt n \varsigma(\min_{i}\{ n_i - n_{i-1}\})
\prod_{j=1}^{k+1} \frac{1}{\sqrt{n_j - n_{j-1}}}
\end{equation}
is in $o \left( n^{\frac{k}{2}}\right)$.
To prove this, pick $\varepsilon>0$ small and $K$
such that $\varsigma(K) < \varepsilon$.
The sum over indices $n_1, \dots n_k$, where all $n_i - n_{i-1}$ is
larger then $K$, is asymptotically bounded by
\begin{eqnarray*}
&&\varepsilon n^{\frac{k}{2}}
{\int \dots \int}_{0 <s_1<s_2<...<s_k<t_1-t_0 } d \underline{s} \\
&&\frac{1}{\sqrt{s_1}} \frac{1}{\sqrt{s_2-s_1}} \dots
 \frac{1}{\sqrt{s_k-s_{k-1}}}
  \frac{1}{\sqrt{t_1-t_0-s_k}}.
\end{eqnarray*}
Now, choose a subset $H$ of the set $\{ 1, 2 \dots k+1\}$, with
$|H|=l \geq 1$. Then the sum over indices $n_1, \dots n_k$, where
$n_i - n_{i-1} \leq K$ for $i \in H$, and $n_i - n_{i-1} >K$ otherwise,
is asymptotically bounded by $K^l n^{\frac{k-l}{2}}$ multiplied by an integral
similar to the previous one. Thus, we have verified that
(\ref{3soros_indoklasa}) is in $o \left( n^{\frac{k}{2}}\right)$,
which implies that $\mathbb{I}_n^{k}$ is asymptotically equal to
(\ref{3soros}).
One can compute
explicitly the integrals not involving the function $\phi$.
Namely, use the identity
\[ \int_C^{t_1-t_0} \frac{(t_1-t_0-x)^l}{\sqrt{x-C}} dx =
(t_1-t_0-C)^{l+1/2} \frac{\Gamma(l+1) \Gamma(1/2)}{\Gamma(l+3/2)}\]
$k-2$ times, to deduce the following formula from (\ref{3soros}):
\begin{eqnarray}
&&\mathbb{I}_n^{k} \sim n^{\frac{k}{2}} c_0^k k! \sigma^{-k}
 (2 \pi)^{-\frac{k-1}{2}} \sqrt{t_1-t_0}
\left[ \phi\left( \frac{b-a}{\sigma \sqrt{t_1-t_0}}\right)\right]^{-1}
\left[ \Gamma \left( \frac{1}{2}\right)\right]^{k-1}
\frac{1}{\Gamma \left( \frac{k-1}{2}\right)} \nonumber \\
&& \iint\limits_{0< s_1<s_2<t_1-t_0} ds_1ds_2
\frac{\phi\left( \frac{a}{\sigma \sqrt{s_1}}\right)}{\sqrt{s_1}}
\frac{\phi\left( \frac{b}{\sigma \sqrt{s_2-s_1}}\right)}{\sqrt{s_2-s_1}}
(t_1-t_0-s_2)^{\frac{k}{2}-\frac{3}{2}} \label{intrepr}
\end{eqnarray}
for $k \geq 2$ (for $k=1$ a simpler formula holds). Finally,
one can slightly simplify the formula (\ref{intrepr}), since
$\Gamma(1/2) = \sqrt \pi$.
In order to complete the method of moments, on the one hand, one needs
to prove that
\begin{equation}
\label{momentlim}
\lim_{n \rightarrow \infty} \mathbb{I}_n^{k} c_0^{-k} n^{-k/2}
= \mathbb{J}^{k},
\end{equation}
where $\mathbb{J}^{k}$ is the $k$-th
moment of $\mathfrak{L}^{a,t_0 \leadsto b, t_1}_{t_1}$.
It is easy to derive from the formulas computed in \cite{Borodin} and
 \cite{Pitman} that
\[ Prob\left( \mathfrak{L}^{a,t_0 \leadsto b, t_1}_{t_1} > y\right) =
\exp \left[ - \frac{1}{2 \sigma^2 (t_1-t_0)}  \left( \left( |a| + |b|
 +\sigma^2 y \right)^2 - (b - a)^2 \right) \right], \]
whence $\mathbb{J}^{k}$
can be expressed with an integral.
On the other hand, one needs to prove that
\begin{equation}
\label{moments}
\limsup_{k \rightarrow \infty} \left(
\frac{\lim_{n \rightarrow \infty} \mathbb{I}_n^{k} c_0^{-k} n^{-k/2}}
{k!}\right)^{1/k} < \infty
\end{equation}
so as to verify that the limit distribution is uniquely determined.
Observe that (\ref{intrepr}) immediately implies (\ref{moments}),
but proving (\ref{momentlim})
turns out to be a nontrivial computation.\\
That is why we need to argue in a slightly different way.
Namely, we are going to prove the first statement of the Lemma
for random walks and then - since the moments for the
random walk have the same asymptotic behavior, and these
moments do converge - we arrive at the original statement.\\
To be more precise, pick a one dimensional
simple symmetric random walk that starts from $a_n$ and
denote its position
after $\lfloor n(t_1-t_0) \rfloor$ steps by $Y_n$. Similarly, its
total number of visits to the origin until
$\lfloor n(t_1-t_0) \rfloor$ is denoted by $Z_n$.
Write $f_1$ for the probability density function
of $\mathfrak B^{a,t_0}_{t_1}$ with $\sigma$ being replaced by $1$
(that is, $f_1(y)= \frac{1}{\sqrt{t_1-t_0}}
\phi(\frac{y-a}{\sqrt{t_1-t_0}}) $).
Similarly, $F_{2|1}(z|y)$ stands for the conditional
cumulative distribution function of
$\mathfrak L^{a,t_0}_{t_1}$ under the
condition $\mathfrak B^{a,t_0}_{t_1}= y$, again with
$\sigma$ replaced by $1$. Note that $F_{2|1}(z|y)$
is the cumulative distribution function of
$\mathfrak L^{a,t_0 \leadsto y, t_1}_{t_1}$.
Let $y$ be a real number and
$y_n = \lfloor y \sqrt n \rfloor$.\\
The following two statements are well known for random walks
(see for example \cite{Borodin} and \cite{R}):
\begin{equation}
\label{rw1}
Prob \left( Y_n < y_n,
\frac{Z_n}{\sqrt n} < z \right) \rightarrow
Prob \left( \mathfrak B^{a,t_0}_{t_1}< y, \mathfrak L^{a,t_0}_{t_1} <z
\right),
\end{equation}
and
\begin{equation}
\label{rw2}
\frac{\sqrt n}{2} Prob \left(
Y_n \in \{ y_n, y_n+1 \} \right) \rightarrow
f_1(y).
\end{equation}
We want to prove that
\begin{equation}
\label{rw3}
\frac{\sqrt n}{2} Prob \left( Y_n \in \{ y_n, y_n+1 \},
\frac{Z_n}{\sqrt n} < z \right) =: p_n(y,z) \rightarrow
f_1(y)F_{2|1}(z|y).
\end{equation}
Note that in (\ref{rw2}) and (\ref{rw3}) the division by $2$ is needed because
of the periodicity of the random walk (i.e. it can return to the origin
only in even number of steps).
Also notice that using (\ref{rw2}), one easily sees that
(\ref{rw3}) is equivalent to the first statement of the Lemma
for simple symmetric random walks. Further, we mention that
(\ref{rw3}) is proved in \cite{Takacs} for
the case $y=a$. The well known local limit theorem for random walks,
and our previous computation yield that
the $k$-th moment of $Z_n / \sqrt n$, under the condition
 $Y_n \in \{ y_n, y_n+1 \}$,
have the same asymptotics
as $\mathbb{I}_n^{k} c_0^{-k} n^{-k/2}$, with $b$ replaced by $y$,
and $\sigma =1$.
Thus (\ref{moments}), the method of moments and (\ref{rw2}) imply
that the distribution of $Z_n / \sqrt n$ - under the condition
 $Y_n \in \{ y_n, y_n+1 \}$ - weakly converges to a uniquely determined
 limit distribution. Whence,
$p(y,z):=\lim_{n \rightarrow \infty} p_n(y,z)$ exists.
Now suppose that there exist some $y_0,z_0$ such that
$p(y_0,z_0) \neq f_1(y_0)F_{2|1}(z_0|y_0)$. For this
fixed $z_0$, $f_1(y)F_{2|1}(z_0|y)$
is clearly continuous in $y$. Further,
since the integral representation in (\ref{intrepr})
is continuous in $b$, the method of moments imply that the
limit distribution, as $n \rightarrow \infty$,
of $Z_n / \sqrt n$ - under the condition  $Y_n \in \{ y_n, y_n+1 \}$ -,
continuously depends on $y$ (with respect to the weak topology). Hence,
$p(y,z_0)$ is also continuous in $y$. Thus one can find an interval
$I$ containing $y_0$ such that
$\int_{I} p(y,z_0) dy \neq
\int_{I} f_1(y)F_{2|1}(z_0|y) dy$, which is a
contradiction to (\ref{rw1}). So we have verified
(\ref{rw3}). But (\ref{rw3}) together with
(\ref{moments}) implies (\ref{momentlim}) and the first assertion of the
Lemma.
\end{proof}

Now, we prove of the convergence of one dimensional
distributions by a standard argument.
That is, we need that for any open intervals
$A$, $B$,
\begin{equation}
\label{global1}
{\bf P} \left(
\frac{S^{(per)}_{nt_0}}{\sqrt n} \in A
, \frac{L_{nt_0}}{\sqrt n} \in B \right) \rightarrow
Prob \left(
\mathfrak{B}_{t_0} \in A, c_0
  \mathfrak{L}_{t_0} \in B \right).
\end{equation}
The second statement of Lemma \ref{slemma1} implies
the local version of (\ref{global1})
in the first coordinate, namely
\begin{eqnarray}
&&\sqrt n {\bf P} \left(
S^{(per)}_{\lfloor nt_0 \rfloor} = \lfloor x \sqrt n \rfloor
, \frac{L_{nt_0}}{\sqrt n} \in B \right) \label{global2} \\
&=&
\frac{1}{\sigma \sqrt{ t_0}}
\phi \left( \frac{x}{\sigma \sqrt{ t_0}} \right)
Prob \left(
c_0
\mathfrak{L}^{0,0 \leadsto x, t_0}_{t_0} \in B \right)
+ o(1) \nonumber.
  \end{eqnarray}
Now, define the real function $\varphi_n$,
by setting $\varphi_n(x)$ to be equal to (\ref{global2}).
Note that for fix $n$, $\varphi_n$ is constant on the
intervals $[k/\sqrt n,
(k+1)/ \sqrt n)$ for any integer $k$.
We have for any $x$,
\[\varphi_n(x) \rightarrow
\frac{1}{\sigma \sqrt{ t_0}} \phi \left( \frac{x}{\sigma \sqrt{ t_0}} \right)
Prob \left(
c_0
\mathfrak{L}^{0,0 \leadsto x, t_0}_{t_0} \in B \right) =: \varphi (x).
\]
Thus, by Fatou's lemma,
\begin{equation}
\label{Fatou1}
\liminf_n \int_A \varphi_n(x) dx \geq
\int_A \varphi(x) dx = Prob \left(
\mathfrak{B}_{t_0} \in A, c_0
  \mathfrak{L}_{t_0} \in B \right).
\end{equation}
Analogously,
\begin{equation}
\label{Fatou2}
\liminf_n \int_{A^c} \varphi_n(x) dx \geq
\int_{A^c} \varphi(x) dx = Prob \left(
\mathfrak{B}_{t_0} \in A^c, c_0
  \mathfrak{L}_{t_0} \in B \right).
  \end{equation}
As it was already mentioned in the beginning of the proof of
Lemma \ref{slemma1}, the rescaled local times converge to the
appropriate limit. Thus,
\begin{eqnarray}
\label{Fatou3}
&&
\int_{A} \varphi_n(x) dx + \int_{A^c} \varphi_n(x) dx =
 {\bf P} \left( \frac{L_{nt_0}}{\sqrt n} \in B \right) \rightarrow
\nonumber \\
&& Prob \left( c_0
  \mathfrak{L}_{t_0} \in B \right) =
\int_{A} \varphi(x) dx + \int_{A^c} \varphi(x) dx.
\end{eqnarray}
Now, using (\ref{Fatou1}), (\ref{Fatou2}) and
(\ref{Fatou3}), we conclude that
the inequalities in (\ref{Fatou1}) and (\ref{Fatou2}) are,
in fact, equalities and the $\liminf$ can be replaced by $\lim$.
This, together with the observation that
 the difference of
 $\int_A \varphi_n(x) dx$ and the left hand side of
 (\ref{global1}) is bounded by a constant times $n^{-1/2}$, implies
(\ref{global1}). \\
The convergence of any finite dimensional marginals can be
proven analogously, as we proved the one dimensional ones.
The only main difference is that one needs a multiple version
of the statements of
Lemma \ref{slemma1}, but its proof is also analogous.\\

Now we turn to the proof of tightness.
Proposition \ref{WIP} implies that the first coordinate
converges weakly to the desired limit (in $C[t_0,1]$ thus in
$D[t_0,1]$ as well), hence is tight, too. We are going to
establish the tightness of the local times. Then it will follow that
\[ \left( \frac{S^{(per)}_{nt}}{\sqrt n}, \frac{L_{nt}}{\sqrt n} \right)_
{t \in [0,1]}\]
is tight, by definition.\\
Since the process $L_{nt}$ is nondecreasing in $t$,
tightness, in fact, can be deduced from the convergence of finite dimensional
distributions. Namely,
Theorem 15.2 in \cite{Billingsley} yields that we only have to
verify the following two statements:
\begin{enumerate}
\item For each $\eta >0$ there is a $d \in \mathbb{R}$ such that
\begin{equation}
\label{feszesseg1}
{\bf P} \left( \frac{L_{n}}{\sqrt{n}} >  d \right) < \eta, \textbf{   } n \geq 1.
\end{equation}
\item For each positive $\eta$ and $\varepsilon$ there is a $\delta$,
$0 < \delta <1$ and an integer $n_0$ such that
\begin{equation}
\label{feszesseg2}
{\bf P} \left( w_{\frac{L_{nt}}{\sqrt n}}(\delta) \geq
\varepsilon \right) \leq \eta, \textbf{   } n \geq n_0.
\end{equation}
Here,
\[ w_{\psi} (\delta) = \inf_{\{ t_i \}} \max_{0<i \leq r}
( \lim_{\tau \nearrow t_i} \psi(\tau) - \psi(t_{i-1})) ,\]
where the infimum is taken over finite sets $\{ t_i \}$, for
which $0 <t_1 < ... < t_r =1$, $t_i - t_{i-1} > \delta$
for all $i$.

\end{enumerate}
Since we have just verified that $L_n/{\sqrt n}$ converges
weakly, (\ref{feszesseg1}) follows.\\
Again, the convergence of finite dimensional distributions
implies that
for fix $\eta > 0$ and
$\varepsilon > 0$ one can find $\delta > 0$ and $n_0$ such that
for all $n > n_0$, $0 \leq k_1 \leq \lfloor 1/ \delta \rfloor$
\[ {\bf P} \left( \frac{\#
\{ k: n k_1 \delta<k<n (k_1+1)\delta, S^{(per)}_k \in \mathcal{J}\}}
{\sqrt{ n \delta}} > \frac{\varepsilon}{\sqrt{\delta}} \right)
< \eta \delta,\]
Now the equidistant partition $\{ t_i \}$ is enough to verify
(\ref{feszesseg2}). Thus we have finished the proof of Proposition
\ref{allitas1}.

\end{proof}

\section{Proof of Theorem \ref{tetel1}}

Note that, though in its spirit our statement is very close to the results of \cite{D-Sz-V}, their proof cannot be applied here
since the limiting process is not strong Markovian (see Remark
\ref{remark1}) thus leaving no chance to apply the martingale method.
Thus we need to argue in a more direct way, using Proposition \ref{allitas1}. In Subsection
\ref{proof1.1} we prove the first statement of the theorem.
That proof with trivial
modifications is easily applicable to cases 2 and 3.
The only non trivial modification is needed in case 4, which is treated in Subsection \ref{proof1.2}.
Everywhere in this Section, we also use the notations introduced in Section 2.

\subsection{Proof of case 1}
\label{proof1.1}

In order to prove the statement, we need some technical lemmas.\\

\begin{Lemma}
\label{slemma2}
Let $E$ and $F$ be any Polish spaces, $X, X_n$ any random variables
taking values in the space $E$ such that $X_n \Rightarrow X$.
Then for any continuous function $f:E \rightarrow F$ one has
$(X_n, f(X_n)) \Rightarrow (X, f(X))$ in the product topology.
\begin{proof}
Pick any $U \subset E \times F$ open set and define
$V = \{ x \in E : (x, f(x)) \in U   \}$.
If $x \in V$, then one can find an open product set
$U_x = E_x \times F_x \subset U$  containing $(x,f(x))$.
Since
$f^{-1}(F_x)$ is open,
$x \in E_x \cap f^{-1}(F_x)$ is also open. Now
$x \in E_x \cap f^{-1}(F_x) \subset V$ implies that
V is open, too. Thus
\[Prob ((X_n,f(X_n)) \in U) = Prob (X_n \in V)\]
and the Portmanteau Theorem (see \cite{Billingsley}, for instance)
yield the statement.
\end{proof}
\end{Lemma}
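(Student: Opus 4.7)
The plan is to recognize this as essentially the continuous mapping theorem applied to the graph map $g : E \to E \times F$ defined by $g(x) = (x, f(x))$. Since the projection maps $\pi_E$ and $\pi_F$ characterize the product topology, continuity of $g$ is equivalent to continuity of its two coordinate functions $\mathrm{id}_E$ and $f$, both of which are continuous by hypothesis. Hence $g$ is continuous, and the classical continuous mapping theorem gives $(X_n, f(X_n)) = g(X_n) \Rightarrow g(X) = (X, f(X))$.

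Since the lemma is stated for Polish spaces and the authors' style in the surrounding argument relies explicitly on the Portmanteau characterization of weak convergence, I would in fact present a short self-contained proof along those lines instead of just quoting the continuous mapping theorem. Namely, for an arbitrary open set $U \subset E \times F$, set $V = g^{-1}(U) = \{ x \in E : (x, f(x)) \in U\}$. To check that $V$ is open, take $x_0 \in V$ and pick a basic open rectangle $E_0 \times F_0 \subset U$ with $(x_0, f(x_0)) \in E_0 \times F_0$; then $E_0 \cap f^{-1}(F_0)$ is an open neighborhood of $x_0$ contained in $V$. Therefore $V$ is open in $E$, and
\[
\mathrm{Prob}\bigl( (X_n, f(X_n)) \in U \bigr) = \mathrm{Prob}(X_n \in V), \qquad \mathrm{Prob}\bigl( (X, f(X)) \in U \bigr) = \mathrm{Prob}(X \in V).
\]
From $X_n \Rightarrow X$ and the Portmanteau theorem applied to the open set $V \subset E$,
\[
\liminf_n \mathrm{Prob}\bigl( (X_n, f(X_n)) \in U \bigr) \ge \mathrm{Prob}\bigl( (X, f(X)) \in U \bigr),
\]
and invoking Portmanteau in the other direction on $E \times F$ yields the claimed weak convergence.

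There is no real obstacle here: the only substantive content is the elementary verification that the graph map of a continuous function is continuous, and the rest is a formal translation via Portmanteau. No use of the Polish assumption is actually needed for this particular statement; it appears only because the ambient framework of the paper is phrased for Polish spaces. I would therefore keep the write-up to a few lines, matching the length and style of the surrounding technical lemmas in Section 3.
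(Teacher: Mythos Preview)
Your proposal is correct and takes essentially the same approach as the paper: both define $V=\{x\in E:(x,f(x))\in U\}$, verify it is open by intersecting a basic rectangle $E_0\times F_0\subset U$ with $f^{-1}(F_0)$, and then apply the Portmanteau theorem. Your extra framing in terms of the graph map and the continuous mapping theorem, and your remark that the Polish hypothesis is not actually used, are accurate observations but do not change the argument.
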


Next, we prove the following Lemma which is an extension
of Proposition \ref{allitas1}.

\begin{Lemma}
\label{allitas2}
\[\left( \frac{S^{(per)}_{nt}}{\sqrt n}, \frac{L_{nt}}{\sqrt n},
\int_{\tau = t_0}^{t} \frac{1}{\sqrt \tau} d \frac{L_{n \tau}}{\sqrt n}
\right)_
{t \in [t_0,1]} \Rightarrow
\left( \mathfrak{B}_t, c_0
\mathfrak{L}_t ,
c_0 \int_{\tau = t_0}^{t} \frac{1}{\sqrt \tau} d \mathfrak{L}_{\tau}
\right)_{t \in [t_0,1]},\]
where the left hand side is understood as a random variable with respect
to the probability measure ${\bf P}$ and
$\Rightarrow$ stands for
weak convergence in the Skorokhod space $D_{\mathbb{R}^3} [t_0,1]$.
\end{Lemma}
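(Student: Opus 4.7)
The plan is to deduce Lemma \ref{allitas2} from Proposition \ref{allitas1} via a continuous mapping argument, exploiting crucially that $t_0>0$ renders the weight $\tau^{-1/2}$ bounded and smooth throughout the integration interval. Writing $Y_n(t)=L_{nt}/\sqrt n$ for the rescaled local time process and $Y_t = c_0 \mathfrak L_t$ for its intended limit, the aim is to express the third coordinate as a continuous functional of $Y_n$ alone, so that no new tightness argument is required.

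The first step is integration by parts for Riemann-Stieltjes integrals: for any right-continuous nondecreasing $\ell$ on $[t_0,1]$,
\[
\int_{t_0}^{t} \tau^{-1/2}\, d\ell(\tau) = \frac{\ell(t)}{\sqrt{t}} - \frac{\ell(t_0)}{\sqrt{t_0}} + \frac{1}{2}\int_{t_0}^{t} \tau^{-3/2}\ell(\tau)\, d\tau =: \Phi(\ell)(t).
\]
Since $\tau^{-1/2}$ and $\tau^{-3/2}$ are bounded on $[t_0,1]$, the map $\Phi:D[t_0,1]\to D[t_0,1]$ is continuous in the uniform topology. In particular, at every $\ell \in C[t_0,1]$ it is continuous in the Skorokhod topology, since Skorokhod convergence to a continuous limit forces uniform convergence.

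The second step invokes Proposition \ref{allitas1}, which yields the joint weak convergence $(S^{(per)}_{nt}/\sqrt n, Y_n) \Rightarrow (\mathfrak B_t, Y)$ in $D_{\mathbb R^2}[t_0,1]$, with an almost surely continuous limit. By the Skorokhod representation theorem the convergence may be realised almost surely on a common probability space; continuity of $Y$ then upgrades Skorokhod convergence of $Y_n$ to uniform convergence, which in turn gives $\Phi(Y_n)\to\Phi(Y)$ uniformly. Combining this with Lemma \ref{slemma2}, applied to the continuous map $(b,\ell)\mapsto (b,\ell,\Phi(\ell))$ from $D_{\mathbb R^2}[t_0,1]$ to $D_{\mathbb R^3}[t_0,1]$, produces the desired joint convergence of all three coordinates.

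The only point requiring a brief verification is that the third coordinate $Z_n(t)=\int_{t_0}^{t} \tau^{-1/2}\, d(L_{n\tau}/\sqrt n)$ genuinely coincides with $\Phi(Y_n)(t)$ when $Y_n$ is a step function with finitely many unit jumps; this reduces to a direct telescoping calculation and is the most mechanical part of the argument. I expect no deeper obstacle, since the blowup of $\tau^{-1/2}$ at the origin is avoided by the assumption $t_0>0$, which is precisely why the Lemma is formulated on $[t_0,1]$ rather than $[0,1]$.
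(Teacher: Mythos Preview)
Your proposal is correct and follows essentially the same route as the paper: both deduce the lemma from Proposition~\ref{allitas1} via Lemma~\ref{slemma2}, treating the third coordinate as a continuous functional of the second. The paper applies Lemma~\ref{slemma2} directly with $E=\{\psi=(\psi_1,\psi_2)\in D_{\mathbb R^2}[t_0,1]:\psi_2\text{ nondecreasing}\}$ and $f(\psi_1,\psi_2)=\bigl(\int_{t_0}^t \tau^{-1/2}\,d\psi_2\bigr)_{t\in[t_0,1]}$, asserting continuity of $f$ without further comment; your integration-by-parts rewriting $\Phi(\ell)(t)=\ell(t)/\sqrt t-\ell(t_0)/\sqrt{t_0}+\tfrac12\int_{t_0}^t\tau^{-3/2}\ell(\tau)\,d\tau$ is simply a concrete way to verify that continuity, and in fact shows it holds on all of $D[t_0,1]$ rather than only on the nondecreasing subspace. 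The Skorokhod representation detour is unnecessary once you have this, since Lemma~\ref{slemma2} already packages the continuous mapping step, but it does no harm.
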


\begin{proof}
Use Proposition \ref{allitas1} and Lemma \ref{slemma2} with the choice
\begin{eqnarray*}
E &=& \{ \psi=(\psi_1, \psi_2) \in D_{\mathbb{R}^2}[t_0,1]: \psi_2
\text{ is non decreasing}\}, \\
F &=& D[t_0,1] \\
f((\psi_1,\psi_2)) &=& \left( \int_{\tau=t_0}^t \frac{1}{\sqrt \tau}
d\psi_2 \right)_{t \in [t_0,1]}
\end{eqnarray*}
to infer Lemma \ref{allitas2}.
\end{proof}

Note that we needed to restrict the processes of Proposition \ref{allitas1}
to $D_{\mathbb{R}^2} [t_0,1]$ in order to the above stochastic integrals be finite.
(This technical difficulty can be avoided in the proof of case 2.)
Finally, we will need Le Cam's famous inequality which was proven in \cite{LC}.
\begin{Lemma}
\label{lemmaPoi}
 Assume $\Sigma_m$ is the sum of $m$ independent, non-identically distributed
 Bernoulli random variables
 $\varepsilon_j; \ 1 \le j \le m$ such that $Prob(\varepsilon_j = 1) = p_j$. Then
 \[
 \sum_{k=0}^{\infty} \Big| Prob (\Sigma_m = k) - e^{-\lambda}\lambda^k/k! \Big| \le
 2 \sum_{j=1}^m p_j^2,
 \]
where $\lambda = p_1 + \dots + p_m$.
\end{Lemma}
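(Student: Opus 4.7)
The plan is to establish the bound via a coupling argument, which is the standard route to Le Cam's inequality. First I would introduce independent auxiliary Poisson random variables: let $Y_1, \dots, Y_m$ be independent, with $Y_j \sim \mathrm{Poisson}(p_j)$. Since the convolution of independent Poissons is Poisson with summed parameters, $\sum_{j=1}^m Y_j \sim \mathrm{Poisson}(\lambda)$. Thus it suffices to control the total variation distance $d_{TV}(\Sigma_m, \sum_j Y_j)$, because the $\ell^1$ distance on the integers is exactly twice the total variation distance, so the quantity we need to bound equals $2\, d_{TV}(\Sigma_m, \mathrm{Poisson}(\lambda))$.

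Second, I would use the fact that total variation distance is subadditive under independent convolutions: if $X_j$ and $Y_j$ are independent across $j$ (and $X_j$ independent of $Y_j$ for each $j$ via an optimal coupling), then
\[
d_{TV}\Bigl(\textstyle\sum_j X_j, \sum_j Y_j\Bigr) \;\leq\; \sum_{j=1}^m d_{TV}(X_j, Y_j).
\]
This follows from the coupling characterization of $d_{TV}$: couple each pair $(X_j, Y_j)$ so that $\Pr(X_j \neq Y_j) = d_{TV}(X_j, Y_j)$, take the independent product coupling, and apply a union bound to the event $\{\sum X_j \neq \sum Y_j\} \subset \bigcup_j \{X_j \neq Y_j\}$.

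Third, I would estimate each individual term $d_{TV}(\mathrm{Bernoulli}(p_j), \mathrm{Poisson}(p_j))$ by direct computation. Writing out the atom-by-atom differences:
\[
|(1-p_j) - e^{-p_j}| + |p_j - p_j e^{-p_j}| + \sum_{k \geq 2} \frac{p_j^k e^{-p_j}}{k!},
\]
each summand is $O(p_j^2)$ as one sees by expanding $e^{-p_j} = 1 - p_j + p_j^2/2 - \cdots$, and after collecting terms one obtains $d_{TV}(\mathrm{Bernoulli}(p_j), \mathrm{Poisson}(p_j)) \leq p_j^2$. The cleanest way is to note $1-p_j \leq e^{-p_j}$, so the first term equals $e^{-p_j} - (1-p_j) \leq p_j^2/2$, the second equals $p_j(1-e^{-p_j}) \leq p_j^2$, and the tail $\sum_{k\geq 2}$ is bounded by $p_j^2/2$ — summing these gives the desired estimate (with room to spare relative to the factor $2$ on the right-hand side of the lemma).

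Combining the three steps yields $\sum_k |\Pr(\Sigma_m = k) - e^{-\lambda}\lambda^k/k!| = 2\, d_{TV}(\Sigma_m, \mathrm{Poisson}(\lambda)) \leq 2 \sum_{j=1}^m p_j^2$. The only mildly technical step is the per-term Bernoulli-Poisson bound, but it reduces to elementary inequalities for the exponential, so there is no real obstacle; the conceptual content lies entirely in the coupling/subadditivity observation.
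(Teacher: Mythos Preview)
Your coupling argument is correct and is in fact the standard modern proof of Le Cam's inequality. The paper itself does not prove this lemma at all: it simply states it and cites Le Cam's original 1960 paper \cite{LC}, so your write-up supplies strictly more than what appears there.

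One small bookkeeping remark on your third step: the three displayed bounds you give ($p_j^2/2$, $p_j^2$, and $p_j^2/2$) sum to $2p_j^2$, which is a bound on the $\ell^1$ distance between $\mathrm{Bernoulli}(p_j)$ and $\mathrm{Poisson}(p_j)$, not on $d_{TV}$; halving yields $d_{TV}\le p_j^2$ as you claim. (The exact value is $d_{TV}=p_j(1-e^{-p_j})$, which is where the genuine slack lies.) With that clarification the final chain
\[
\sum_{k\ge 0}\bigl|Prob(\Sigma_m=k)-e^{-\lambda}\lambda^k/k!\bigr|
= 2\,d_{TV}\bigl(\Sigma_m,\mathrm{Poisson}(\lambda)\bigr)
\le 2\sum_{j=1}^m d_{TV}\bigl(\varepsilon_j,Y_j\bigr)
\le 2\sum_{j=1}^m p_j^2
\]
gives exactly the stated bound, so there is no additional room to spare in the constant.
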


Now, we can proceed to the proof of case 1 of Theorem \ref{tetel1}.
First, we are going to prove a simplified version of the assertion,
namely, the convergence of
the measures $\mu^{\searrow}_n$ restricted to $C[t_0,1]$.
Then the statement of the
first part of the Theorem will follow easily.\\
Note that one can think about our model as having two sources of randomness.
The first one is the choice of $x$ and the second is the choice of $\xi$'s.
In Section 2, we were only dealing with the first source, but now we are going
to treat the second one, as well.\\
It would be more convenient to consider  $S^{\searrow}_n$ as if
the time instants of the reflections on the wall $W_k$
($1 \leq k\leq n$)
were not computed.
Since Proposition \ref{allitas1}
and the sctterer configuration being symmetric to the $y$-axis
imply that
$ |\{i \leq n: \mathcal{F}^{\searrow}_i
 \dots \mathcal{F}^{\searrow}_1 \in \mathcal{J} \}|$
  is asymptotically of order $\sqrt n$, the diffusively scaled
   limits of $S^{\searrow}_n$ and of this "modified $S^{\searrow}_n$"
    (i.e. when we do not count the reflections on the wall)
    have the same limit. Thus it is sufficient to prove our statement
    for the "modified $S^{\searrow}_n$" - which will also be denoted by
    $S^{\searrow}_n$ in the sequel. \\
    Note that the assumption of the periodic scatterer configuration
    being symmetric implies
    \begin{equation}
    \label{modulus}
    | S^{\searrow}_n| = |S^{(per)}_n|.
    \end{equation}

Now for fix $x$, define $p(nt)$ as the probability,
generated by the choice of $\xi_n$'s, of the event that
$S^{\searrow}_{\lfloor nt\rfloor} (x)
S^{\searrow}_{\lfloor nt\rfloor +1} (x) <0$, i.e. after step number
$\lfloor nt\rfloor$, the particle crosses the hole.
Lemma \ref{allitas2} implies that - by Skorokhod's representation theorem, cf. \cite{Billingsley} - there exists a probability
space $(\Omega, \mathbb{Q})$ together with random variables
$(\tilde{X}_n, \tilde{Y}_n, \tilde{Z}_n)$ having the same joint
distribution as
\[\left( \left(\frac{S^{(per)}_{nt}}{\sqrt n}\right)_{t \in [t_0,1]},
 \left( \frac{c}{c_1} \int_{\tau = t_0}^{t} \frac{1}{\sqrt \tau}
d \frac{L_{n \tau}}{\sqrt n}\right)_{t \in [t_0,1]},
\left( \int_{\tau = t_0}^{t}
 p(n \tau)
 d L_{n \tau}\right)_{t \in [t_0,1]} \right)\]
with respect to ${\bf P}$,  and also with random variables
$(\tilde{X}, \tilde{Y})$ having the same joint
distribution as
\[\left( \left( \mathfrak{B}_t\right)_{t \in [t_0,1]},
\left(\frac{cc_0}{c_1} \int_{\tau = t_0}^{t} \frac{1}{\sqrt \tau} d \mathfrak{L}_{\tau} \right)_
{t \in [t_0,1]} \right), \]
such that
$(\tilde{X}_n, \tilde{Y}_n) \rightarrow (\tilde{X} ,\tilde{Y})$
$\mathbb{Q}$-almost surely. Here, $c = \lim_n \alpha_n \sqrt n$
Now, for $\mathbb{Q}$-almost all $\omega \in \Omega$ we define the
measures $\nu(\omega)$, $\nu_n(\omega), \lambda_n(\omega)$ on
$C[t_0,1]$ in the following way. Consider the modulus of
$\tilde{X}(\omega)$, i.e. $|\tilde{X}(\omega)| \in C[t_0,1]$
(if $\tilde{X}(t_0)(\omega)>0$; otherwise consider $-|\tilde{X}(\omega)|$),
pick a Poisson point process - on some abstract probability space
$(\Omega_{\omega}, \mathbb{Q}_{\omega})$ -
with intensity measure $d\tilde{Y}(\omega)$, and denote its point
by $P_1<P_2<... $ N.b. there are finitely many points. If it has $m$
points, put $P_{m+1} =1$. Now
reflect the
subgraph of $|\tilde{X}(\omega)|[P_{2i+1}, P_{2i+2}]$ to the origin
for each $i$
(if $\tilde{X}(t_0)(\omega)>0$; otherwise reflect $-|\tilde{X}(\omega)|$). The
distribution of the resulting random function - with respect to
$\mathbb{Q}_{\omega}$ - generates a measure on $C[t_0,1]$ which we denote by
$\nu(\omega)$. The construction of $\nu_n(\omega)$ is
similar, with two differences.
The first is that one should replace $\tilde{X}$ and
$\tilde{Y}$ by $\tilde{X}_n$ and $\tilde{Y}_n$ and the second is that
instead of the
Poisson point process, one introduces independent Bernoulli
random variables for each discontinuity of the function
$\tilde{Y}_n(\omega)$ with parameters being equal to the size of jump
of $\tilde{Y}_n(\omega)$ at the corresponding discontinuity.
Then denote by $P_1 <P_2 < \dots$ the positions, where the Bernoulli
random variables are equal to 1.
Finally, $\lambda_n(\omega)$
is defined the same way as $\nu_n(\omega)$ with $\tilde{Y}_n$ being
replaced by $\tilde{Z}_n$.\\
Using Lemma \ref{lemmaPoi}, one can infer that for $\mathbb{Q}$-almost
all $\omega$, $\nu_n(\omega) \Rightarrow
\nu(\omega)$ on $C[t_0,1]$.
Further, $\alpha_n \sqrt n
\rightarrow c$ implies that for any fixed $x$ and $\varepsilon >0$, if
$L_{\lfloor n \tau \rfloor -1} <L_{\lfloor n \tau \rfloor}$, i.e.
in $\lfloor n \tau \rfloor$ steps the particle arrives to $\mathcal J$, then
$|p(\lfloor n \tau \rfloor) -
c/(c_1 \sqrt{\lfloor n \tau \rfloor})| < \varepsilon / \sqrt n$
assuming that $n$ is large enough.
Whence, one can naturally couple the
Bernoulli
distributed random variables used by the definition of $\nu_n$ and
$\lambda_n$ in such a way that the resulting random
functions in $C[t_0,1]$ coincide on a subset of $\Omega_\omega$, whose
$\mathbb{Q}_\omega$ measure tends to $1$ as $n \rightarrow \infty$. Consequently,
$\lambda_n(\omega) \Rightarrow
\nu(\omega)$ on $C[t_0,1]$ for $\mathbb{Q}$-almost
all $\omega$, too.\\
Define the measures $\varrho$ and $\varrho_n$ on $C[t_0,1]$ by
\begin{eqnarray*}
\varrho(A) &=& \int_{\Omega} \nu(\omega)(A) d\mathbb{Q}(\omega),\\
\varrho_n(A) &=& \int_{\Omega} \lambda_n(\omega)(A) d\mathbb{Q}(\omega).
\end{eqnarray*}
Using that $\lambda_n(\omega) \Rightarrow
\nu(\omega)$ for $\mathbb{Q}$-almost
all $\omega$, Fatou's lemma and the Portmanteau theorem, we obtain for any
$A \subset C[t_0,1]$ open set:
\begin{eqnarray*}
\liminf_n \varrho_n(A) &=& \liminf_n \int_{\Omega} \lambda_n(\omega)(A) d\mathbb{Q}(\omega) \\
&\geq& \int_{\Omega} \liminf_n \lambda_n(\omega)(A) d\mathbb{Q}(\omega)
\geq \int_{\Omega} \nu(\omega)(A) d\mathbb{Q}(\omega) = \varrho(A).
\end{eqnarray*}
Whence - by the Portmanteau theorem, again - $\varrho_n \Rightarrow \varrho$ on $C[t_0,1]$.\\
Observe that by construction,
$\varrho$ is the measure on $C[t_0,1]$ generated
by a QRBM($c c_0/c_1 ,\sigma$).
Similarly, $\varrho_n$ is the restriction of
${\bf W}^{\searrow}_n$ to $C[t_0,1]$.\\
Now, one can easily prove the first part of the Theorem.
Since the choice of $t_0$ was arbitrary, a limit theorem of any
finite dimensional distributions is implied by the above computation.
The tightness is also easy since the moduli of the
random functions are tight.
Thus we have finished the proof of the first part of the Theorem
(in fact, with the constant
$c_2=c_0/c_1$).\\

\subsection{Proof of case 4}
\label{proof1.2}

As in the previous subsection, the tightness is trivial since the moduli of the
random functions are tight. The convergence of one dimensional
distributions follows from symmetry and Proposition \ref{LLT}.
Here, we are only going to prove the convergence of two dimensional marginals
since
the convergence of any finite dimensional ones can be proven similarly.\\
The idea of the proof is that we know the convergence of
$(| S^{\searrow}_{\lfloor n t_0 \rfloor}| / \sqrt n,
|S^{\searrow}_{\lfloor n t_1 \rfloor}|/ \sqrt n)$ to the desired limit,
thus we only need to care about the sign. For the latter, assume that
$S^{\searrow}_{\lfloor n t_0 \rfloor}/ \sqrt n$ is
in a fixed positive interval,
while $|S^{\searrow}_{\lfloor n t_1 \rfloor}| / \sqrt n$
is in another fixed positive interval. Using Proposition \ref{allitas1},
and the results of the previous subsection,
we can estimate the
asymptotic probability of the local time being zero under the above condition.
If the local time is zero, then the trajectory avoids the origin, hence
$S^{\searrow}_{\lfloor n t_1 \rfloor}>0$. If not, then the particle
arrives at the origin eventually in $[n t_0, n t_1]$, and we need to verify
that at time $n t_1$, it will end up in the positive half line with probability
$1/2$. The heuristic reason for this is that once it is near the origin,
since $\alpha_n \sqrt n$ is large, it will cross the holes many times,
and thus forget that it came from the positive half-line.
This argument will imply that the weak limit must be the two dimensional
marginal of the BM.\\
Let us make the above argument precise. To do so, we
will use the notations of the previous subsection.
Especially, introduce the modification of $S^{\searrow}_n$ as in the
previous subsection. Thus (\ref{modulus}) still holds.
Fix $0<t_0<t_1 \leq 1$ and $J_0$, $J_1$ compact subintervals of
$\mathbb{R}_+ \cup \{ 0\}$. Our aim is to prove that
\begin{equation}
\label{case4new-1}
{\bf P} \left(
\frac{ S_{\lfloor nt_0 \rfloor}^{\searrow}}{\sqrt n} \in J_0,
\frac{ S_{\lfloor nt_1 \rfloor}^{\searrow}}{\sqrt n} \in J_1
\right) \rightarrow Prob \left(
\mathfrak{B}_{t_0} \in J_0,\mathfrak{B}_{t_1} \in J_1 \right),
\end{equation}
and
\begin{equation}
\label{case4new0}
{\bf P} \left(
\frac{ S_{\lfloor nt_0 \rfloor}^{\searrow}}{\sqrt n} \in J_0,
\frac{ S_{\lfloor nt_1 \lfloor}^{\searrow}}{\sqrt n} \in - J_1
\right) \rightarrow Prob \left(
\mathfrak{B}_{t_0} \in J_0,\mathfrak{B}_{t_1} \in - J_1 \right),
\end{equation}
as $n \rightarrow \infty$. Once we verify (\ref{case4new-1})
and (\ref{case4new0}), by symmetry, they will also hold true for
$J_0$ being a compact interval in $\mathbb{R}_- \cup \{ 0\}$, and
hence the convergence of two dimensional marginals will follow.\\
Define the probabilities
\[p_{J_0,J_1} = Prob \left( \forall s: t_0 <s <t_1: \mathfrak{B}_{s} >0
| \mathfrak{B}_{t_0} \in J_0,|\mathfrak{B}_{t_1}| \in J_1 \right).\]
Now let $A$ be the set of functions $\psi$ in $C[0,1]$ for which
$\psi(t_0) \in J_0$, $\forall s: t_0 <s <t_1:
\psi(s) >0$, and $\psi(t_1) \in J_1$. Then the Wiener measure of
$\partial A$ is zero, thus
Proposition \ref{WIP} implies that
\begin{equation}
\label{case4new1}
{\bf P} \left( \forall t_0 < s < t_1: S_{ns}^{(per)} >0 |
\frac{S_{\lfloor nt_0 \rfloor}^{(per)}}{\sqrt n} \in J_0,
\frac{| S_{\lfloor nt_1 \rfloor}^{(per)}|}{\sqrt n} \in J_1
\right) \rightarrow  p_{J_0,J_1},
\end{equation}
as $n \rightarrow \infty$. On the other hand, the strong Markov
property of the BM obviously implies
\begin{equation}
\label{case4new1.5}
 Prob \left( \mathfrak{B}_{t_1} \in J_1 |
 \mathfrak{B}_{t_0} \in J_0, \exists s: t_0 <s <t_1: \mathfrak{B}_{s} =0,
  |\mathfrak{B}_{t_1}| \in J_1 \right) = \frac12.
\end{equation}
Now, with the notation
\[ \mathcal{A}_4(n)=
\{x:
\frac{S_{\lfloor nt_0 \rfloor}^{\searrow}(x)}{\sqrt n} \in J_0,
\exists t_0 < s < t_1: S_{ns}^{(per)}(x) =0,
\frac{| S_{\lfloor nt_1 \rfloor}^{\searrow}(x)|}{\sqrt n} \in J_1
\},\]
we want to prove that
\begin{equation}
\label{case4new2}
{\bf P} \left(  \frac{ S_{\lfloor nt_1 \rfloor}^{\searrow}}{\sqrt n} \in J_1 |
\mathcal{A}_4(n)
\right) \rightarrow \frac12.
\end{equation}
Note that combining Proposition \ref{WIP},
(\ref{modulus}), (\ref{case4new1}), (\ref{case4new1.5}) and (\ref{case4new2}),
one can deduce
(\ref{case4new-1}) and (\ref{case4new0}).
Thus it remains to prove (\ref{case4new2}).\\
To do so, first
observe that by
Proposition \ref{allitas1}, for every
$\varepsilon >0$ there exists $\delta >0$ and $N$ such that
for all $n >N$,
\begin{equation}
\label{case41}
{\bf P} \left( L_{[ nt_0, nt_1 ]} > \delta \sqrt n |
\mathcal{A}_4(n)
\right) > 1- \varepsilon.
\end{equation}
Now consider the Markov transition matrices
 \[ A_p = \left( \begin{array}{ccc}
  1-p & p \\
   p & 1-p
    \end{array} \right)
     \]
      on the space $\{ +,-\}$
       and a time dependent Markov chain $M_k$ such that
        $M_0 = +$ and the transition between $k$ and $k+1$ is
	 described by $A_{p_k}$ with some numbers $p_k$.
Now, for a fixed $x \in \mathcal{M}^{(per)}$ and $n$, define
$D_k(x)$ as the $k$-th leftmost discontinuity of the function
$s \rightarrow L_{\lfloor ns \rfloor}(x)$ on $s \in [t_0,
t_1)$. With the choice $p_k(x) = {\alpha_{n D_k(x)}}/
{c_1}$,
$1 \leq k
\leq L_{[\lfloor nt_0 \rfloor, \lfloor n t_1 \rfloor -1] }(x)$
for each $x$, one easily sees
that for $n$ large enough, the probability
in (\ref{case4new2}) is equal to
\begin{equation}
\label{case43}
\frac{1}{{\bf P}(\mathcal{A}_4(n)) }
\int_{\mathcal{A}_4(n)}
Prob(M_{L_{[\lfloor nt_0 \rfloor, \lfloor n t_1 \rfloor -1] }(x)}=+)
d {\bf P}(x).
\end{equation}
	  If fact, this is only true for the case of $\mu_n^{\searrow}$,
	  while in the case of $\mu_n^{\equiv}$, one needs to set
	  $p_k(x) = {\alpha_{n}}/
	  {c_1}$. \\
	  On the other hand, elementary computations show that if one
	  selects sequences $B(n) \rightarrow \infty, m(n) \rightarrow \infty$
	  and non-negative numbers $p_{k,n}, 1 \leq n, 1 \leq k \leq m(n)$, then
	  with the
	  transition matrices corresponding to $p_{1,n}, ..., p_{m(n),n}$,
\begin{eqnarray*}
&&Prob(M_{m(n)}=+) \\
&=&
\left( \begin{array}{ccc}
1 & 0
\end{array} \right)
A_{p_{1,n}} \dots A_{p_{m(n),n}}
\left( \begin{array}{ccc}
1 \\
0
\end{array} \right) \\
&=&
 \left( \begin{array}{ccc}
  1/\sqrt{2} & 1/\sqrt{2}
   \end{array} \right)
    \left( \begin{array}{ccc}
     1 & 0 \\
      0 & \prod_{k=1}^{m(n)} (1-2p_{k,n})
       \end{array} \right)
        \left( \begin{array}{ccc}
	 1/\sqrt{2} \\
	  1/\sqrt{2}
	   \end{array} \right) \\
	   &=& 1/2 + o(1),
	   \end{eqnarray*}
	   as $n \rightarrow \infty$.
	   Further, $o(1)$ converges to zero uniformly in $m$ and $p_k$
	   if $m(n) > \delta \sqrt n$ and
	   $\min_{1 \leq k \leq m(n)} \{ p_{k,n} \sqrt n \} > B(n)$.
	   Now, choose
	   $B(n)=\min_{N \geq \lfloor n t_0 \rfloor}
	   \{ \alpha_{N} \sqrt N /c_1\}$.
	   This estimation together with
	   (\ref{case41}) and (\ref{case43}) yields (\ref{case4new2}).\\

{\bf ACKNOWLEDGEMENTS.}
The support of the Hungarian National Foundation for Scientific
Research grant No. K 71693 is gratefully acknowledged.
The authors thank the kind hospitality of the Fields
Institute (Toronto) where - during June 2011 -  part of this work was done.
They are also thankful to the referee for his constructive remarks leading to the improvement of the exposition.

\end{document}